\documentclass[reqno, 12pt]{amsart}
\pdfoutput=1
\makeatletter
\let\origsection=\section \def\section{\@ifstar{\origsection*}{\mysection}} 
\def\mysection{\@startsection{section}{1}\z@{.7\linespacing\@plus\linespacing}{.5\linespacing}{\normalfont\scshape\centering\S}}
\makeatother        

\usepackage{amsmath,amssymb,amsthm}
\usepackage{mathrsfs}
\usepackage{mathabx}\changenotsign
\usepackage{dsfont}
 
\usepackage{xcolor}
\usepackage[backref]{hyperref}
\hypersetup{
    colorlinks,
    linkcolor={red!60!black},
    citecolor={green!60!black},
    urlcolor={blue!60!black}
}

\usepackage[open,openlevel=2,atend]{bookmark}

\usepackage[abbrev,msc-links,backrefs]{amsrefs} 
\usepackage{doi}

\renewcommand{\PrintDOI}[1]{\doi{#1}}

\usepackage[T1]{fontenc}
\usepackage{lmodern}
\usepackage[babel]{microtype}
\usepackage[english]{babel}

\linespread{1.2}
\usepackage{geometry}
\geometry{left=27.5mm,right=27.5mm, top=25mm, bottom=25mm}
\numberwithin{equation}{section}
\numberwithin{figure}{section}

\usepackage{enumitem}
\def\rmlabel{\upshape({\itshape \roman*\,})}

\def\alabel{\upshape({\itshape \alph*\,})}

\let\polishlcross=\l
\def\l{\ifmmode\ell\else\polishlcross\fi}

\let\sm=\setminus

\makeatletter
\def\moverlay{\mathpalette\mov@rlay}
\def\mov@rlay#1#2{\leavevmode\vtop{   \baselineskip\z@skip \lineskiplimit-\maxdimen
   \ialign{\hfil$\m@th#1##$\hfil\cr#2\crcr}}}
\newcommand{\charfusion}[3][\mathord]{
    #1{\ifx#1\mathop\vphantom{#2}\fi
        \mathpalette\mov@rlay{#2\cr#3}
      }
    \ifx#1\mathop\expandafter\displaylimits\fi}
\makeatother

\newcommand{\dcup}{\charfusion[\mathbin]{\cup}{\cdot}}

\DeclareFontFamily{U}  {MnSymbolC}{}
\DeclareSymbolFont{MnSyC}         {U}  {MnSymbolC}{m}{n}
\DeclareFontShape{U}{MnSymbolC}{m}{n}{
    <-6>  MnSymbolC5
   <6-7>  MnSymbolC6
   <7-8>  MnSymbolC7
   <8-9>  MnSymbolC8
   <9-10> MnSymbolC9
  <10-12> MnSymbolC10
  <12->   MnSymbolC12}{}
\DeclareMathSymbol{\powerset}{\mathord}{MnSyC}{180}

\usepackage{tikz}
\usetikzlibrary{calc,decorations.pathmorphing,through}
\usetikzlibrary{shapes,decorations}
\usetikzlibrary{graphs,graphs.standard}
\usetikzlibrary{decorations.markings}
\usetikzlibrary{positioning, intersections, backgrounds}

\newcommand{\triple}[7]{

	\ifx\relax#4\relax
		\def\qoffs{0pt}
	\else
		\def\qoffs{#4}
	\fi

	\def\qhedge{
		($#1+#3!\qoffs!-90:#2-#3$) --
		($#2+#1!\qoffs!-90:#3-#1$) --
		($#3+#2!\qoffs!-90:#1-#2$) -- cycle}

	\coordinate (12) at ($#1!\qoffs!90:#2$);
	\coordinate (13) at ($#1!\qoffs!-90:#3$);
	\coordinate (23) at ($#2!\qoffs!90:#3$);
	\coordinate (21) at ($#2!\qoffs!-90:#1$);
	\coordinate (31) at ($#3!\qoffs!90:#1$);
	\coordinate (32) at ($#3!\qoffs!-90:#2$);
	
	\def\nqhedge{
		(13) let \p1=($(13)-#1$), \p2=($(12)-#1$) in
			arc[start angle={atan2(\y1,\x1)}, delta angle={atan2(\y2,\x2)-atan2(\y1,\x1)-360*(atan2(\y2,\x2)-atan2(\y1,\x1)>0)}, x radius=\qoffs, y radius=\qoffs] --
		(21) let \p1=($(21)-#2$), \p2=($(23)-#2$) in
			arc[start angle={atan2(\y1,\x1)}, delta angle={atan2(\y2,\x2)-atan2(\y1,\x1)-360*(atan2(\y2,\x2)-atan2(\y1,\x1)>0)}, x radius=\qoffs, y radius=\qoffs] --
		(32) let \p1=($(32)-#3$), \p2=($(31)-#3$) in
			arc[start angle={atan2(\y1,\x1)}, delta angle={atan2(\y2,\x2)-atan2(\y1,\x1)-360*(atan2(\y2,\x2)-atan2(\y1,\x1)>0)}, x radius=\qoffs, y radius=\qoffs] --
		cycle}

		\ifx\relax#5\relax
		\def\qlwidth{1pt}
	\else
		\def\qlwidth{#5}
	\fi
	
		\ifx\relax#7\relax
		\fill \nqhedge;
	\else
		\fill[#7]\nqhedge;
	\fi

		\ifx\relax#6\relax
		\draw[line width=\qlwidth,rounded corners=\qoffs]\nqhedge;
	\else
		\draw[line width=\qlwidth,#6]\nqhedge;
	\fi
}

\newcommand{\arcThroughThreePoints}[4][]{
\coordinate (middle1) at ($(#2)!.5!(#3)$);
\coordinate (middle2) at ($(#3)!.5!(#4)$);
\coordinate (aux1) at ($(middle1)!1!90:(#3)$);
\coordinate (aux2) at ($(middle2)!1!90:(#4)$);
\coordinate (center) at ($(intersection of middle1--aux1 and middle2--aux2)$);
\draw[#1] 
 let \p1=($(#2)-(center)$),
      \p2=($(#4)-(center)$),
      \n0={veclen(\p1)},      
      \n1={atan2(\y1,\x1)}, 
      \n2={atan2(\y2,\x2)},
      \n3={\n2>\n1?\n2:\n2+360}
    in (#2) arc(\n1:\n3:\n0);
}

\newcommand{\quadrupel}[8]{

		\ifx\relax#5\relax
		\def\qoffs{0pt}
	\else
		\def\qoffs{#5}
	\fi

				\def\qhedge{
		($#1+#4!\qoffs!-90:#2-#4$) -- 
		($#2+#1!\qoffs!-90:#3-#1$) -- 
		($#3+#2!\qoffs!-90:#4-#2$) -- 
		($#4+#3!\qoffs!-90:#1-#3$) -- cycle}

	\coordinate (12) at ($#1!\qoffs!90:#2$);
	\coordinate (14) at ($#1!\qoffs!-90:#4$);
	\coordinate (23) at ($#2!\qoffs!90:#3$);
	\coordinate (21) at ($#2!\qoffs!-90:#1$);
	\coordinate (34) at ($#3!\qoffs!90:#4$);
	\coordinate (32) at ($#3!\qoffs!-90:#2$);
	\coordinate (41) at ($#4!\qoffs!90:#1$);
	\coordinate (43) at ($#4!\qoffs!-90:#3$);
	
	\def\nqhedge{
		(14) let \p1=($(14)-#1$), \p2=($(12)-#1$) in 
			arc[start angle={atan2(\y1,\x1)}, delta angle={atan2(\y2,\x2)-atan2(\y1,\x1)-360*(atan2(\y2,\x2)-atan2(\y1,\x1)>0)}, x radius=\qoffs, y radius=\qoffs] --
		(21) let \p1=($(21)-#2$), \p2=($(23)-#2$) in 
			arc[start angle={atan2(\y1,\x1)}, delta angle={atan2(\y2,\x2)-atan2(\y1,\x1)-360*(atan2(\y2,\x2)-atan2(\y1,\x1)>0)}, x radius=\qoffs, y radius=\qoffs] --
		(32) let \p1=($(32)-#3$), \p2=($(34)-#3$) in 
			arc[start angle={atan2(\y1,\x1)}, delta angle={atan2(\y2,\x2)-atan2(\y1,\x1)-360*(atan2(\y2,\x2)-atan2(\y1,\x1)>0)}, x radius=\qoffs, y radius=\qoffs] --
		(43) let \p1=($(43)-#4$), \p2=($(41)-#4$) in 
			arc[start angle={atan2(\y1,\x1)}, delta angle={atan2(\y2,\x2)-atan2(\y1,\x1)-360*(atan2(\y2,\x2)-atan2(\y1,\x1)>0)}, x radius=\qoffs, y radius=\qoffs] --
		cycle}

		\ifx\relax#6\relax
		\def\qlwidth{1pt}
	\else
		\def\qlwidth{#6}
	\fi
	
		\ifx\relax#8\relax
		\fill \nqhedge;
	\else
		\fill[#8]\nqhedge;
	\fi

		\ifx\relax#7\relax
		\draw[line width=\qlwidth,rounded corners=\qoffs]\nqhedge;
	\else
		\draw[line width=\qlwidth,#7]\nqhedge;
	\fi
}

\usepackage{graphicx}
\usepackage{wrapfig}

\usepackage{subcaption}
\captionsetup[subfigure]{labelfont=rm}

\let\epsilon=\varepsilon

\let\rho=\varrho
\let\theta=\vartheta
\let\kappa=\varkappa

\def\ex{\mathrm{ex}}

\newcommand{\cF}{\mathscr{F}}
\newcommand{\cP}{\mathscr{P}}

\theoremstyle{plain}
\newtheorem{thm}{Theorem}[section]
\newtheorem{fact}[thm]{Fact}
\newtheorem{prop}[thm]{Proposition}
\newtheorem{clm}[thm]{Claim}
\newtheorem{cor}[thm]{Corollary}
\newtheorem{lem}[thm]{Lemma}

\theoremstyle{definition}
\newtheorem{dfn}[thm]{Definition}

\usepackage{accents}
\newcommand{\seq}[1]{\accentset{\rightharpoonup}{#1}}

\let\phi=\varphi

\begin{document}

\title[Tur\'an's Theorem for the Fano plane]
{Tur\'an's Theorem for the Fano plane}

\author[Louis Bellmann]{Louis Bellmann} 
\author[Christian Reiher]{Christian Reiher}
\thanks{The second author was supported by the European Research Council 
(ERC grant PEPCo 724903).}
\address{Fachbereich Mathematik, Universit\"at Hamburg, Hamburg, Germany}
\email{Christian.Reiher@uni-hamburg.de}
\email{louisnbell@aol.com}

\subjclass[2010]{05C65, 05D05}
\keywords{Tur\'an's hypergraph problem, Fano plane}

\begin{abstract} 
Confirming a conjecture of Vera T.~S\'os in a very strong sense, we 
give a complete solution to Tur\'an's hypergraph problem for the Fano plane.
That is we prove for $n\ge 8$ that among all $3$-uniform hypergraphs on $n$ vertices not 
containing the Fano plane there is indeed exactly one whose number of edges 
is maximal, namely the balanced, complete, bipartite hypergraph. 
Moreover, for $n=7$ there is exactly one other extremal configuration with the same 
number of edges: the hypergraph arising from a clique of order $7$  by removing all 
five edges containing a fixed pair of vertices. 

For sufficiently large values $n$ this was proved earlier  
by F\"uredi and Simonovits, and by Keevash and Sudakov, who utilised the stability method.     
\end{abstract}
\maketitle
\section{Introduction}

With his seminal work~\cite{Turan}, Tur\'an initiated extremal graph theory 
as a separate subarea of combinatorics. After proving his well known extremal result 
concerning graphs not containing a clique of fixed order, he proposed to study similar problems 
for graphs arising from platonic solids and for hypergraphs. 
For instance, given a $3$-uniform hypergraph~$F$ and a 
natural number $n$, there arises the question to determine the largest number~$\ex(n, F)$ 
of edges that a $3$-uniform hypergraph~$H$ can have without containing $F$ as a subhypergraph. 

Here a {\it $3$-uniform hypergraph} $H=(V, E)$ consists of a set $V$ of {\it vertices} and 
a collection~${E\subseteq V^{(3)}}=\{e\subseteq V\colon |e|=3\}$ of $3$-element subsets of $V$, that are called the 
{\it edges} of $H$. Since all hypergraphs occurring in this article are $3$-uniform, 
we will henceforth abbreviate the terminology and just say ``hypergraph'' when we mean 
``$3$-uniform hypergraph.''

Despite tremendous efforts over the last 70 years, our knowledge about these  Tur\'an 
functions $n\longmapsto \ex(n, F)$ is very limited, even for very innocent looking 
hypergraphs $F$ such as the tetrahedron $F=K^{(3)}_4$.
It is thus customary to focus on the {\it Tur\'an densities} 
\[
	\pi(F)=\lim_{n\to\infty} \frac{\ex(n, F)}{\binom{n}{3}}\,,
\]
the existence of which follows from the fact that the sequences 
$n\longmapsto \ex(n, F)\big/ \binom{n}{3}$ are, by a result of Katona, Nemetz, and 
Simonovits~\cite{KNS64}, monotonically decreasing. These Tur\'an densities are 
not understood very well either and all one knows in this regard about the tetrahedron 
are the estimates   
\begin{equation} \label{eq:Raz}
	\tfrac59 \le \pi\bigl(K^{(3)}_4\bigr)\le 0.5616\,.
\end{equation}

The lower bound follows from an explicit construction due to Tur\'an himself (see 
e.g.~\cite{Er77}), which is widely believed to be optimal. As observed by Brown~\cite{Br83} 
and Kostochka~\cite{Ko82} there is for each fixed $n$ a large number of $K^{(3)}_4$-free
hypergraphs with the same number of edges that is conjecturally extremal. 
It is often speculated that this non-uniqueness of the extremal configuration is responsible 
for the enormous difficulty of the problem.
The upper bound in~\eqref{eq:Raz} was established
by Razborov~\cite{Ra10} by means of his flag algebraic approach introduced in~\cite{Ra07}.  

Vera T. S\'os proposed to study Tur\'an's hypergraph problem in the special 
case where $F=\cF$ is the {\it Fano plane}, i.e., the projective plane over the field with 
two elements.
More precisely, one takes $\cF$ to be the hypergraph with $7$ vertices, which are the  
{\it points} of the Fano plane, and whose $7$ edges correspond to the {\it lines} of the 
Fano plane (see Fig.~\ref{fig:Fano-1}).  

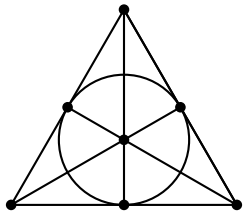
\begin{figure}[ht]
\centering
	\begin{tikzpicture}
		\coordinate(z) at (0, 0);
		\coordinate(a) at (1.5, 0);
		\coordinate(b) at (3, 0);
		\coordinate(x) at (1.5, {1.5*sqrt(3)});
		\coordinate(y) at (3/4, {sqrt(27)/4});
		\coordinate(c) at (3/2, {sqrt(3)/2});
		\coordinate(d) at (9/4, {sqrt(27)/4});
		
		\path[black,thick]
			(x) edge (z)
			(x) edge (b)
			(x) edge (b)
			(x) edge (a)
			(y) edge (b)
			(z) edge (b)
			(z) edge (d);
		\node [draw,black, thick, circle through=(a)] at (c) {};
		
		\foreach \i in {a, b, c, d, x, y, z}
			\fill  (\i) circle (2pt);			
		
	\end{tikzpicture}
        \caption{Fano plane}
  	\label{fig:Fano-1}
\end{figure}	
     
One verifies easily that no matter how the vertices of the Fano plane get coloured 
with two colours, there will always be a monochromatic edge; this fact suggests that  
bipartite hypergraphs could be relevant to the problem under discussion. Given a natural
number~$n$, we denote the {\it balanced, complete, bipartite} hypergraph on $n$ vertices 
by $B_n$.
This hypergraph is defined so as to have a partition $V(B_n)=X\dcup Y$ of its $n$-element 
vertex set with~${\big||X|-|Y|\big|\le 1}$ such that a triple $e\subseteq V(B_n)$ forms an edge 
of $B_n$ if and only if it intersects both $X$ and $Y$. The above observation on vertex 
colourings implies $\cF\not\subseteq B_n$ and, hence, that $\ex(n, \cF)\ge b(n)$, where 
\[
	b(n)=\binom{n}{3}
				-\binom{\lfloor n/2 \rfloor}{3}
				-\binom{\lfloor (n+1)/2 \rfloor}{3} 
\]
denotes the number of edges of $B_n$. This number rewrites more conveniently as 
\begin{equation} \label{eq:bn}
	b(n)=\frac{n-2}{2}\cdot\left\lfloor \frac{n^2}{4}\right\rfloor
	=
	\begin{cases}
		\tfrac18 n^2(n-2) & \text{ if $n$ is even,} \\
		\tfrac18(n^2-1)(n-2) & \text{ if $n$ is odd.} 
		  \end{cases}
\end{equation}
S\'os conjectured this construction to be optimal, i.e., that 
\begin{equation}\label{eq:Vera}
	\ex(n, \cF)=b(n)
\end{equation}
and that, moreover, $B_n$ is the unique $n$-vertex hypergraph with $b(n)$ edges not 
containing a Fano plane. According to F\"uredi~\cite{Fu}, this conjecture of S\'os was 
widely known since the~1970's. In her problem and survey article~\cite{So76}, which often 
serves as a reference for this problem, she discusses several connections between design 
theory and extremal hypergraph theory, even though \eqref{eq:Vera} does not seem to be 
mentioned there.

The first result in this direction is due to de Caen and F\"uredi~\cite{DeFu00},
who proved that $\pi(\cF)=\tfrac 34$ holds for the Fano plane $\cF$. Their article 
introduced the so-called {\it link multigraph method} on which all further progress
on S\'os's conjecture is based, and which has since then found many further applications 
(see e.g.~\cites{MuRo02, KeMu12}). A few years later it turned out that by combining the work 
in~\cite{DeFu00} with Simonovits' {\it stability method}~\cite{Si68} one can 
prove~\eqref{eq:Vera} for all sufficiently large $n$. 
This was done by F\"uredi and Simonovits in~\cite{FuSi05} and,
independently, by Keevash and Sudakov in~\cite{KeSu05}. It is not straightforward to extract 
optimal quantitative information from either of those articles, but it seems safe to say 
that following~\cite{FuSi05} closely~\eqref{eq:Vera} would be hard to show for all 
$n\ge 10^{100}$ and easy for $n\ge 10^{300}$, while the arguments in~\cite{KeSu05} would probably 
require $n$ to be larger than $10^{900}$. 
      
The main result of the present work proves~\eqref{eq:Vera} for all $n\ge 7$. Furthermore, 
we show that for $n\ge 8$ the balanced, complete, bipartite hypergraph is indeed the 
only extremal configuration. For $n=7$, however, there is a second extremal example, 
which is the hypergraph $J_7$ remaining from the complete hypergraph $K^{(3)}_7$ when 
one deletes all five edges involving a fixed pair of vertices. Plainly $J_7$ has 
$\binom 73-5=30=b(7)$ edges and~$\cF\not\subseteq J_7$ follows from the fact that in 
the Fano plane every pair of points determines a line.      

\begin{thm}\label{thm:main}
	For every integer $n \ge 7$ we have 
	\begin{equation*} \label{eq:Fano}
		\ex(n, \cF)=b(n)=
			\frac{n-2}{2}\cdot \bigg\lfloor\frac{n^2}{4}\bigg\rfloor
				\,,
	\end{equation*}
	where $\cF$ denotes the Fano plane.
	Moreover, for $n\ge 8$ the only extremal hypergraph is the balanced, complete, bipartite 
	hypergraph $B_n$, while for $n=7$ there are exactly two extremal hypergraphs, namely $B_7$ 
	and~$J_7$.
\end{thm}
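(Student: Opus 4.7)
The plan is to proceed by strong induction on $n$, with the base cases $n = 7$ and $n = 8$ handled by direct structural analysis and the inductive step for $n \ge 9$ carried out by a vertex-deletion argument, supplemented, when vertex deletion alone does not suffice, by a link-multigraph analysis. For $n = 7$, a $\cF$-free hypergraph $H$ with at least $30$ edges omits at most $5$ of the $\binom{7}{3} = 35$ possible triples, and because every pair of points of $\cF$ lies on exactly one line, the collection of omitted triples is highly constrained: it must consist either of the five triples through a common pair of vertices (yielding $J_7$), or of all triples that lie entirely inside one of the two parts of a $(3, 4)$-bipartition of the vertex set (yielding $B_7$). The case $n = 8$ reduces to $n = 7$: a $\cF$-free hypergraph on $8$ vertices with more than $48$ edges has average degree exceeding $18$, so some vertex can be removed to leave at least $31$ edges on $7$ vertices, contradicting the $n = 7$ bound; uniqueness of $B_8$ follows by a refined degree count at the boundary.

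For the inductive step I let $H$ be a $\cF$-free hypergraph on $n$ vertices with $|E(H)| \ge b(n)$ and exploit the identity $\delta(B_n) = b(n) - b(n-1)$, which follows from a direct computation. The easy case is $\delta(H) \le b(n) - b(n-1)$: removing a vertex $v$ of minimum degree yields a $\cF$-free hypergraph $H - v$ on $n - 1$ vertices with at least $b(n-1)$ edges, so by induction $H - v = B_{n-1}$, and reintroducing the edges through $v$ subject to the constraint $\cF \not\subseteq H$ forces $H = B_n$; the exceptional possibility $H - v = J_7$, which can only occur when $n - 1 = 7$, is ruled out by a short check showing that no vertex can be added to $J_7$ so as to yield a $\cF$-free hypergraph on $8$ vertices with $48$ edges.

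The main obstacle is the opposite case, $\delta(H) > b(n) - b(n-1)$, which is compatible with $|E(H)| = b(n)$ for odd $n$ because then the average degree in $B_n$ strictly exceeds its minimum. Here I would invoke a sharpened link-multigraph argument in the spirit of de Caen--F\"uredi: pass to a pair $\{u, v\}$ selected by an appropriate extremal principle, analyse the joint link and codegree structure on $V(H) \setminus \{u, v\}$, and, exploiting the high minimum degree of $H$ together with the defining feature of $\cF$ that every pair of points lies on a unique line, directly embed $\cF$ into $H$ --- a contradiction. The quantitative heart of the argument is to calibrate this embedding so that it works for \emph{all} $n \ge 9$, avoiding the asymptotic slack inherent in the stability method.
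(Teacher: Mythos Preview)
Your outline has the right inductive scaffolding, but the two places where real work is needed are underspecified in ways that matter.

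First, the ``easy case'' extension --- from $H\setminus v \cong B_{n-1}$ to $H\cong B_n$ --- is asserted but not argued. Knowing that \emph{one} vertex $v$ has $H\setminus v\cong B_{n-1}$ does not by itself pin down the link of $v$: you must show that if $v$ has degree exactly $b(n)-b(n-1)$ and $\cF\not\subseteq H$, then the link of $v$ omits precisely the pairs inside one part of the bipartition. The paper sidesteps this for even $n$ by observing that the average degree equals $b(n)-b(n-1)$, so \emph{every} vertex $v$ satisfies $H\setminus v\cong B_{n-1}$; a short lemma exploiting the uniqueness of the large independent set in $B_{n-1}$ then finishes. With only a single such $v$ this argument is unavailable.

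Second, and more seriously, your ``hard case'' --- odd $n$ with $\delta(H)>b(n)-b(n-1)$ --- is where essentially the entire difficulty lies, and ``pass to a pair $\{u,v\}$, analyse the joint link and codegree structure, directly embed $\cF$'' is not a plan. The paper does not delete one or two vertices here at all. It deletes a \emph{tetrahedron} $K$ (which exists by a density count) and decomposes $b(n)=e_0(K)+e_1(K)+e_2(K)+e_3(K)$. The point is that each summand admits a sharp upper bound whose total is \emph{exactly} $b(n)$: one gets $e_0(K)\le b(n-4)$ by induction, $e_1(K)\le f_4(n-4)$ by the F\"uredi--Simonovits theorem on $4$-multigraphs without three crossing pairs, $e_2(K)\le 5(n-4)$ because $H$ contains no $K_5^{(3)}$, and $e_3(K)=4$. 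Equality throughout then forces $H\setminus K\cong B_{n-4}$ and every outside vertex to have degree exactly $5$ into $K$, from which $H\cong B_n$ follows via a separate characterisation lemma. Proving the $K_5^{(3)}$-freeness is itself a substantial step, requiring a bound $f_5(m)\le\tfrac14(7m^2-m)$ on $5$-multigraphs and a preliminary claim about $6$-vertex subhypergraphs with $18$ edges. None of this machinery --- the choice to delete four vertices rather than two, the exact value of $f_4$, the need to exclude $K_5^{(3)}$ first, the characterisation lemma for reassembly --- is visible in your sketch, and a pair-deletion approach does not obviously produce the tight arithmetic identity that deleting a tetrahedron does.
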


We would like to point out that this result does not supersede the earlier 
works~\cites{FuSi05, KeSu05}. This is because they also prove the stability result
that every large hypergraph with density~$\tfrac{3}{4}-o(1)$ not containing a Fano plane has 
to look ``almost'' like $B_n$. 

The proof of Theorem~\ref{thm:main} proceeds by induction on $n$  and uses the link 
multigraph method.
Let us mention for completeness that for $n\le 6$ one trivially has $\ex(n, \cF)=\binom n3$,
the unique extremal configuration being the complete hypergraph~$K^{(3)}_n$. 

\subsection*{Organisation.} 
We prove Theorem~\ref{thm:main} in Section~\ref{sec:proof}.
Some auxiliary considerations dealing with small hypergraphs and inductive characterisations 
of balanced, complete, bipartite hypergraphs are gathered in Section~\ref{sec:pre}.
The results on multigraphs we shall require are developed in Section~\ref{sec:multi}.

\section{Preliminaries}\label{sec:pre}

\subsection{Tetrahedra}\label{subsec:Tetra}
The $n$-vertex hypergraphs we need to deal with in the proof of our main result 
will have $b(n)$ edges and, hence, an edge density of $\frac34+o(1)$. 
In view of~\eqref{eq:Raz} such hypergraphs contain tetrahedra provided that $n$ is 
sufficiently large. Later on it will be important to know that this actually holds 
for small values of $n$ as well, which can be seen by means of the following well-known, 
elementary argument.

Starting from the obvious fact $\ex\bigl(4, K^{(3)}_4\bigr)=3$ one uses the monotonicity 
of the sequence 
\[
	n\longmapsto \frac{\ex\bigl(n, K^{(3)}_4\bigr)}{\binom{n}{3}}
\]
in order to obtain 
\[
	\ex\bigl(n, K^{(3)}_4\bigr)\le\frac 3{4}\binom{n}{3}
\]
for every $n\ge 4$. Together with the estimate
\[
	\frac 3{4}\binom{n}{3}
	=
	\frac {n(n-1)(n-2)}8
	<
	\frac {(n+1)(n-1)(n-2)}8
	\overset{\text{\eqref{eq:bn}}}{\le}
	b(n)\,,
\]
which holds for all $n\ge 3$, this leads to the following statement.

\begin{fact} \label{fact:tetra}
	For $n\ge 4$, every hypergraph on $n$ vertices with $b(n)$ edges contains a tetrahedron.
\end{fact}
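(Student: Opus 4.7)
The plan is to reduce the statement to a clean numerical comparison between $b(n)$ and the standard upper bound $\tfrac34\binom{n}{3}$ on $\ex(n, K_4^{(3)})$, and to derive this upper bound by propagating the trivial case $n=4$ through the Katona--Nemetz--Simonovits monotonicity principle already recalled in the introduction.

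First I would dispose of the base case. On four vertices $K_4^{(3)}$ has exactly four edges, so any $4$-vertex hypergraph with all four edges is itself a tetrahedron, while removing any edge leaves a tetrahedron-free configuration; thus $\ex(4, K_4^{(3)})=3=\tfrac34\binom{4}{3}$. The fact that $n\mapsto \ex(n, K_4^{(3)})/\binom{n}{3}$ is nonincreasing then immediately yields
\[
\ex(n, K_4^{(3)})\le \tfrac34\binom{n}{3}\qquad\text{for every }n\ge 4.
\]

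It remains to verify the purely arithmetic inequality $\tfrac34\binom{n}{3}<b(n)$ for every $n\ge 4$. Substituting the closed forms from~\eqref{eq:bn} and splitting on the parity of $n$, one obtains
\[
\tfrac34\binom{n}{3}=\tfrac{n(n-1)(n-2)}{8}<\tfrac{(n+1)(n-1)(n-2)}{8}\le b(n),
\]
the last inequality being an equality when $n$ is odd and strict when $n$ is even (in which case $n^2-1<n^2$). Combining the two displays, any $n$-vertex hypergraph with $b(n)$ edges has strictly more edges than $\ex(n, K_4^{(3)})$ and therefore must contain a tetrahedron.

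There is no genuine obstacle here; the entire content of the fact is that the conjectured Fano-plane extremal densities already exceed the Tur\'an threshold for tetrahedra at every $n\ge 4$, so nothing beyond the trivial base case and the density-monotonicity principle is required.
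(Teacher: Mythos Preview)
Your proof is correct and follows essentially the same approach as the paper: both establish $\ex(4,K_4^{(3)})=3$, propagate this via the Katona--Nemetz--Simonovits monotonicity to $\ex(n,K_4^{(3)})\le \tfrac34\binom{n}{3}$, and then verify the identical chain $\tfrac{n(n-1)(n-2)}{8}<\tfrac{(n+1)(n-1)(n-2)}{8}\le b(n)$ using~\eqref{eq:bn}. Your added remark on when the last inequality is strict versus an equality is accurate and merely a slight elaboration of what the paper leaves implicit.
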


\subsection{Finding Fano planes} \label{subsec:Fano}

This subsection discusses two ways of looking at the Fano plane $\cF$ that turn out to be helpful
for realising that a given hypergraph $H$ contains a copy of $\cF$. 

The first of them goes back to the work of de Caen and F\"uredi~\cite{DeFu00} and 
reappeared in all subsequent articles addressing the Tur\'an problem for the Fano plane. 
Given a vertex $x$ of an arbitrary hypergraph $H$ one may form its so-called {\it link graph}
with vertex set~$V(H)$ in which two vertices $u$ and $v$ are declared to be adjacent 
if and only if the triple~$uvx$ is an edge of $H$. Now the simple yet important observation
one frequently uses is that if $xyz$ denotes an arbitrary edge of the Fano plane $\cF$,
then the six further edges of~$\cF$ correspond to certain edges of the link graphs
of $x$, $y$, and $z$. Moreover, these edges in the link graphs use four vertices only 
and they form a configuration which has, for obvious reasons, been called {\it ``three 
crossing pairs''} in~\cite{FuSi05} (see Fig.~\ref{fig:Fano-Link}).    

\begin{figure}[ht]
\centering
\begin{subfigure}[b]{0.2\textwidth}
    \centering
	\begin{tikzpicture}
		\coordinate(z) at (0, 0);
		\coordinate(a) at (1.5, 0);
		\coordinate(b) at (3, 0);
		\coordinate(x) at (1.5, {1.5*sqrt(3)});
		\coordinate(y) at (3/4, {sqrt(27)/4});
		\coordinate(c) at (3/2, {sqrt(3)/2});
		\coordinate(d) at (9/4, {sqrt(27)/4});
		
		\path[black,thick]
			(x) edge (z);
		\path[{green!50!black}, thick]
			(x) edge (b)
			(x) edge (a);
		\path[blue,thick]
			(y) edge (b);
		\node [draw,blue, thick, circle through=(a)] at (c) {};
		\path[red, thick]
			(z) edge (b)
			(z) edge (d);

		\fill[{green!50!black}] (x) circle (2pt);
		\fill[blue] (y) circle (2pt);
		\fill[red] (z) circle (2pt);
		\foreach \i in {a, b, c, d}
			\fill  (\i) circle (2pt);			
		\node[red] at (-0.2, 0.2) {$x$};
		\node[blue] at (0.5, 1.3) {$y$};
		\node[{green!50!black}] at (1.3, 2.8) {$z$};
	\end{tikzpicture}
        \caption{Fano plane}
    \end{subfigure}
		\hskip3cm
    \begin{subfigure}[b]{0.2\textwidth}
    \centering\vskip1cm
	\begin{tikzpicture}
	\coordinate(y) at (1, 1);
	\coordinate(w) at (1, 3);
	\coordinate(z) at (3, 1);
	\coordinate(x) at (3, 3);	
		
	\path[red,thick]
		(w) edge (x)
		(y) edge (z)
	;
	\path[blue,thick]
		(w) edge (z)
		(x) edge (y)
	;
	\path[{green!60!black},thick]
		(w) edge (y)
		(x) edge (z)
	;
	\foreach \i in {x, y, z, w}
			\fill  (\i) circle (2pt);
	\end{tikzpicture}
        \caption{Crossing pairs}
    \end{subfigure}
	\caption{The edge $xyz$ and the link graphs of $\color{red}{x}$, $\color{blue}{y}$, 
		and $\color{green!60!black}{z}$.}
	\label{fig:Fano-Link}
\end{figure}	

Another way of locating Fano planes in dense hypergraphs focuses on the link graph
of a single vertex. Plainly, every vertex $x$ of the Fano plane $\cF$ belongs to three
edges of $\cF$, which correspond to a perfect matching $M$ in the link graph of $x$
restricted to the six remaining vertices of $\cF$. There are four further edges in~$\cF$ 
forming a certain tripartite hypergraph~$\cP$, whose partition classes are given by $M$.
Owing to its connection with Pasch's axiom in the axiomatic approach to planar Euclidean 
geometry (see~\cite{Pasch}*{{\normalfont\scshape\centering\S}2, Grundsatz IV}), 
$\cP$ is often called the {\it Pasch hypergraph} (see Fig.~\ref{fig:Pasch}). 

This perspective on the Fano plane is 
especially useful when combined with the stability method, for the Pasch hypergraph
is known to have vanishing Tur\'an density---a fact exploited both in~\cite{FuSi05}
and in~\cite{KeSu05}. In the present work, the Pasch hypergraph plays a much less prominent 
role and it will only be mentioned in the proof of Lemma~\ref{lem:11-18} below.  

\begin{figure}[ht]
\centering
\begin{subfigure}[b]{0.2\textwidth}
    \centering
	\begin{tikzpicture}
		\coordinate(z) at (0, 0);
		\coordinate(a) at (1.5, 0);
		\coordinate(b) at (3, 0);
		\coordinate(x) at (1.5, {1.5*sqrt(3)});
		\coordinate(y) at (3/4, {sqrt(27)/4});
		\coordinate(c) at (3/2, {sqrt(3)/2});
		\coordinate(d) at (9/4, {sqrt(27)/4});

		\path[red, thick]
			(x) edge (b)
			(z) edge (d);
		\path[black,thick]
			(x) edge (a)
			(x) edge (z)
			(y) edge (b);
		\node [draw,red, thick, circle through=(a)] at (c) {};
		\path[black, thick]
			(z) edge (b);

		\fill[{red}] (d) circle (2pt);
		\foreach \i in {a, b, c, x, z, y}
			\fill  (\i) circle (2pt);			
		\node[red] at (2.5, 1.5) {$x$};
	\end{tikzpicture}
    \end{subfigure}
    \hskip2cm
    \begin{subfigure}[b]{0.2\textwidth}
    \centering\vskip1cm
	\begin{tikzpicture}
	\coordinate(z) at (0, 0);
	\coordinate(a) at (2, 0);
	\coordinate(b) at (3.5, 0);
	\coordinate(x) at (2.2, 2.6);
	\coordinate(y) at (1.32, 1.56);	
			
	\draw[name path=xa, thick] (x) -- (a);
	\draw[name path=yb, thick] (y) -- (b);
	\path[name intersections={of=xa and yb, by={c}}];
	\path[black,thick]
		(x) edge (z)
		(x) edge (a)
		(b) edge (y)
		(b) edge (z)
	;
	\path[red,thick]
		(z) edge (c)
		(y) edge (a)
		(b) edge (x);
	\foreach \i in {x, y, z, a, b, c}
			\fill  (\i) circle (2pt);
	\end{tikzpicture}
    \end{subfigure}
	\caption{The Pasch hypergraph contained in the Fano plane.}
	\label{fig:Pasch}
\end{figure}
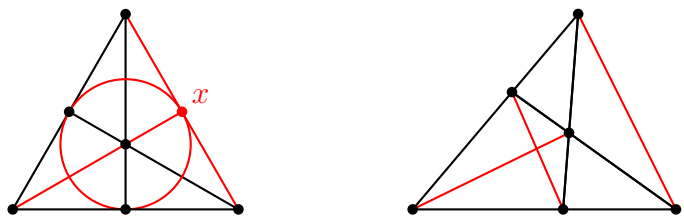
 
\subsection{Small hypergraphs}\label{subsec:small} In this subsection we gather several 
auxiliary statements addressing hypergraphs on $7$ or $8$ vertices.
We begin with the case~$n=7$ of Theorem~\ref{thm:main}, which will later constitute the start
of an induction.   

\begin{lem} \label{lem:n=7}
	Every hypergraph with $7$ vertices and $30$ edges not containing a Fano plane 
	is isomorphic to either $B_7$ or $J_7$.
\end{lem}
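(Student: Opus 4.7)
The plan is to shift attention to the set $\bar H$ of $\binom{7}{3}-30=5$ non-edges of $H$ and run a double counting argument over the $30$ Fano planes that can be drawn on the vertex set $V(H)$. Counting the pairs $(\cF', T)$ with $\cF'$ a Fano plane on $V(H)$ and $T$ a line of $\cF'$ in two ways gives $30\cdot 7=35\cdot 6$, so every triple of $V(H)$ is a line in exactly $6$ Fano planes. Summing over non-edges,
\[
	\sum_{\cF'}\big|E(\cF')\cap \bar H\big|=6\cdot 5=30\,.
\]
Since $H$ is Fano-free, each of the $30$ Fano planes contains at least one non-edge; combined with the equality above this forces every Fano plane on $V(H)$ to contain \emph{exactly one} non-edge of $H$. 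In particular, no two distinct non-edges of $H$ can simultaneously be lines of a common Fano plane.

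Next I would translate this into a combinatorial constraint on the non-edges. Two distinct lines of a Fano plane meet in exactly one point, and conversely any two triples on $V(H)$ sharing exactly one vertex can always be completed to a common Fano plane (take the three lines through the common point, which force a pairing of the remaining six vertices, and adjoin either of the two Pasch configurations on that pairing). Combined with the previous paragraph, this shows that any two of the five non-edges of $H$ meet in either $0$ or $2$ vertices.

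The remaining task is a purely combinatorial classification: list all $5$-element families of $3$-subsets of a $7$-set with pairwise intersections of size $0$ or $2$. Declaring two non-edges \emph{close} when they share exactly $2$ vertices, I would partition the non-edges into the connected components (``clusters'') of this relation. Non-edges lying in different clusters then share $0$ vertices, hence the vertex supports of distinct clusters are disjoint. A short case analysis shows that every cluster is of one of two types: either \emph{(a)} all its triples share a common pair of vertices (so a cluster of size $k$ uses exactly $k+2$ vertices), or \emph{(b)} it is contained in $K_4^{(3)}$ on some $4$-element set (so it uses exactly $4$ vertices). The key step here is to check that three pairwise $2$-intersecting triples without a common pair are forced onto just $4$ vertices.

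Finally, I would enumerate the ways of writing $5$ as a sum of cluster sizes with total support at most $7$. Only two configurations survive: one cluster of size~$5$ of type~\emph{(a)} (the common pair yielding $J_7$) and one cluster of size~$4$ of type~\emph{(b)} together with a single isolated triple on the remaining $3$ vertices (the $4$-set and $3$-set forming the parts of $B_7$). Thus $H\cong J_7$ or $H\cong B_7$, as required. The main obstacle in this plan is the cluster classification, but since only $5$ triples on $7$ vertices are involved the remaining case analysis stays genuinely small.
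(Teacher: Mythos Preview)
Your proposal is correct and follows essentially the same approach as the paper: a double-counting argument (the paper sums over permutations in $S_7$, you sum over the $30$ labelled Fano planes on $V(H)$, which differs only by the factor $|\mathrm{Aut}(\cF)|=168$) shows that every Fano plane on $V(H)$ meets $\bar H$ in exactly one triple, whence any two non-edges intersect in $0$ or $2$ vertices. Both proofs then finish with the same small case analysis, for which you supply more detail via the cluster decomposition than the paper's ``quick case analysis''.
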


\begin{proof}  
	Let $H$ be such a hypergraph with vertex set $[7]$ and write $\overline{H}$
	for its complement, which has $5$ edges.
	
	For every permutation $\pi$ in the symmetric group $S_7$ we denote the number of 
	triples among
	\begin{align*}
		& \pi(1)\pi(2)\pi(3),\,\,\, \pi(3)\pi(4)\pi(5), \,\,\, \pi(1)\pi(5)\pi(6), \,\,\,
		  \pi(1)\pi(4)\pi(7), \\
		& \pi(3)\pi(6)\pi(7), \,\,\, \pi(2)\pi(5)\pi(7), \,\,\,
		\text{ and } \,\,\,
		\pi(2)\pi(4)\pi(6)\,,
	\end{align*}
	which are edges of $\overline{H}$, by $A(\pi)$. As these seven triples form 
	a Fano plane, the number $A(\pi)$ cannot vanish for any $\pi\in S_7$, wherefore
	\[
		\sum_{\pi\in S_7}A(\pi)\ge |S_7|=7!\,.
	\]

	On the other hand, every edge of $\overline{H}$ appears in the above list for 
	precisely $7\cdot 3!\cdot 4!=7!/5$ permutations $\pi$ and a double-counting 
	argument yields
	\[
		\sum_{\pi\in S_7}A(\pi)=\tfrac{7!}5 \cdot e(\overline{H})=7!\,.
	\]

	For these reasons, we have $A(\pi)=1$ for every $\pi\in S_7$.
	If $\overline{H}$ would have two edges intersecting in a single vertex, 
	then an appropriate permutation~$\pi\in S_7$ would satisfy~${A(\pi)\ge 2}$, 
	which has just been proved to be false. Therefore, any two 
	distinct edges of~$\overline{H}$ are either disjoint or they intersect in a pair.

	\begin{figure}[ht]
\centering
\begin{subfigure}[b]{0.3\textwidth}
    \centering
	\begin{tikzpicture}
	\foreach \x in {1,2,...,7}{
		\coordinate (\x) at (90+\x*360/7:1.5);
 	}
	\triple{(1)}{(7)}{(6)}{4.5pt}{1.5pt}{red!70!black}{red!70!black,opacity=0.2};
	\triple{(2)}{(4)}{(3)}{4.5pt}{1.5pt}{red!70!black}{red!70!black,opacity=0.2};
	\triple{(2)}{(5)}{(3)}{4.5pt}{1.5pt}{red!70!black}{red!70!black,opacity=0.2};
	\triple{(2)}{(5)}{(4)}{4.5pt}{1.5pt}{red!70!black}{red!70!black,opacity=0.2};
	\triple{(3)}{(5)}{(4)}{4.5pt}{1.5pt}{red!70!black}{red!70!black,opacity=0.2};
	\foreach \x in {1,2,...,7}{
 		\fill (90+\x*360/7:1.5) circle (2pt);
 	}
	\end{tikzpicture}
        \caption{$\overline{B_7}$}
        \label{fig:7a}
    \end{subfigure}
		\hskip3cm
    \begin{subfigure}[b]{0.3\textwidth}
    \centering\vskip1cm
	\begin{tikzpicture}
 	\foreach \x in {1,2,...,7}{
 			\coordinate (\x) at (90+\x*360/7:1.5);
 	}
	\triple{(4)}{(3)}{(1)}{4.5pt}{1.5pt}{red!70!black}{red!70!black,opacity=0.2};
	\triple{(4)}{(3)}{(2)}{4.5pt}{1.5pt}{red!70!black}{red!70!black,opacity=0.2};
	\triple{(4)}{(3)}{(5)}{4.5pt}{1.5pt}{red!70!black}{red!70!black,opacity=0.2};
	\triple{(4)}{(3)}{(6)}{4.5pt}{1.5pt}{red!70!black}{red!70!black,opacity=0.2};
	\triple{(4)}{(3)}{(7)}{4.5pt}{1.5pt}{red!70!black}{red!70!black,opacity=0.2};
	\foreach \x in {1,2,...,7}{
 		\fill (90+\x*360/7:1.5) circle (2pt);
 	}
	\end{tikzpicture}
        \caption{$\overline{J_7}$}
        \label{fig:7b}
    \end{subfigure}
	\caption{Possibilities for $\overline{H}$.}
	\label{fig:n=7}
\end{figure}
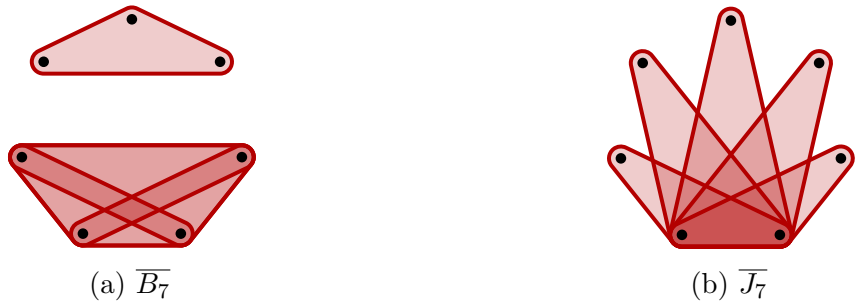	
	A quick case analysis 
	discloses that there are only two hypergraphs on $7$ vertices with $5$ edges having 
	this property,
	namely the disjoint union of a tetrahedron and a single edge 
	(see Fig.~\ref{fig:7a}), 
	and the hypergraph whose edges are the five triples containing a fixed pair of vertices
	(see Fig.~\ref{fig:7b}). 
	In the former case $H$ is isomorphic to $B_7$ and in the latter case one has $H\cong J_7$.
\end{proof}

The next lemma analyses certain Fano-free hypergraphs on $7$ vertices with possibly 
only~$29$ edges. It will allow us later to exclude several 
configurations on six vertices in a hypothetical minimal counterexample to 
Theorem~\ref{thm:main}. In its proof we exploit that every graph on six vertices with eleven 
edges contains a perfect matching. Moreover, the unique graph on six vertices with ten edges 
not containing a perfect matching consist of a $K_5$ plus an isolated vertex. Both facts
can either be proved by a direct case analysis based on Tutte's $1$-factor theorem~\cite{Tutte}
or by plugging $n=6$ and $\beta=2$ into~\cite{B}*{Corollary~II.1.10}   

\begin{lem} \label{lem:11-18}
	Let $H$ be a hypergraph on $7$ vertices not containing a Fano plane. 
	If some vertex $v$ of $H$ satisfies $d(v)\ge 11$ and $e(H\sm v)\ge 18$,
	then $H\sm v$ is isomorphic to $B_6$.  
\end{lem}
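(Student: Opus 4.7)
The plan is to combine the Pasch perspective on the Fano plane (Subsection~\ref{subsec:Fano}, Figure~\ref{fig:Pasch}) with a counting argument on perfect matchings. Write $V'=V(H)\sm\{v\}$, let $G$ denote the link graph of $v$ on $V'$, and set $H'=H\sm v$; by hypothesis $G$ has at most four non-edges while $H'$ has at most two non-edges. For any perfect matching $M=\{a_1b_1,a_2b_2,a_3b_3\}$ of $G$, the triples $va_ib_i$ are all edges of $H$, so if either of the two Pasch hypergraphs supported on the bipartition of $V'$ induced by $M$ lies inside $H'$, then $H$ contains a copy of $\cF$. Since $H$ is Fano-free, each of these two Pasches must contain at least one non-edge of $H'$; as the two Pasches are disjoint and $H'$ has at most two non-edges, $H'$ has exactly two non-edges $T_1,T_2$, one inside each Pasch on $M$, and in particular both $T_1$ and $T_2$ are transversals of $M$.

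Next I would show that $G$ has at least three perfect matchings. Indeed, $K_6$ has $15$ perfect matchings, each of its edges lies in exactly three of them, and $G$ misses at most four edges, so a union bound leaves at least $15-4\cdot 3=3$ perfect matchings surviving in $G$. Applying the previous step to each such matching, the two triples $T_1$ and $T_2$ must be transversals of at least three distinct perfect matchings of $V'$ and lie in different Pasches in each case. A brief parity calculation shows that two distinct transversals of a fixed matching belong to different Pasches precisely when $|T_1\cap T_2|$ is even, so in our situation $|T_1\cap T_2|\in\{0,2\}$.

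If $|T_1\cap T_2|=2$, write $T_1=\{x,y,z\}$, $T_2=\{x,y,w\}$, and $V'\sm(T_1\cup T_2)=\{a,b\}$. A perfect matching of $V'$ for which both $T_1$ and $T_2$ are transversals must pair each of $x$ and $y$ with a vertex of $\{a,b\}$, forcing $z$ to be matched with $w$; this leaves only two perfect matchings of $V'$ meeting the transversal condition, contradicting the existence of at least three such matchings inside $G$. Therefore $T_1\cap T_2=\emptyset$, so $H'$ arises from $K^{(3)}_6$ by deleting two disjoint triples, i.e., $H'\cong B_6$. The main technical point is the interaction between the perfect-matching count of $G$ and the Pasch-transversal constraint on $T_1,T_2$; once this is isolated, everything else is elementary bookkeeping.
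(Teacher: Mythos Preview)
Your argument is correct and takes a cleaner, more combinatorial route than the paper's. Both proofs open with the Pasch perspective: any perfect matching in the link of $v$ forces each of the two complementary Pasch configurations on that matching to contain a non-edge of $H'$, so the (at most two, hence exactly two) non-edges $T_1,T_2$ of $H'$ must be transversals of every such matching. From there the paper fixes one matching, names the non-edges explicitly, and then invokes an external structural fact about $6$-vertex graphs with ten edges and no perfect matching (appealing to Tutte's theorem and Bollob\'as) in order to manufacture a further matching that yields the contradiction by hand. You instead count: the union bound leaves $G$ with at least three perfect matchings, the parity observation forces $|T_1\cap T_2|$ to be even, and the case $|T_1\cap T_2|=2$ is eliminated because at most two matchings of $V'$ can have both triples as transversals. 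Your version is self-contained and symmetric, sidestepping the outside matching-theory input; the paper's is more hands-on but leans on that classification as a black box.
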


\begin{proof}
	Set $K=V(H)\sm \{v\}$ and let $L$ denote the link graph of $v$ restricted to $K$.
	It has~$6$ vertices and at least $11$ edges, and thus it contains a perfect matching $M$, 
	say with edges $x_1x_2$, $x_3x_4$, $x_5x_6$. 
	
	Notice that the complement $H_\star$ of $H\sm v$ has at most two edges. 
	Assuming indirectly that $H\sm v$ is not isomorphic to $B_6$ we know that 
	this complement does not consist of two disjoint edges 
	and thus there is a vertex, say $x_6$, belonging to all edges of $H_\star$. 
	In other words, $\{x_1, \ldots, x_5\}$ is a clique of order $5$ in $H$.
	
	\begin{figure}[ht]
\centering
\begin{tikzpicture}[scale=1]
	\node[ellipse, draw=black, thick,minimum width=8cm,minimum height=4cm, 
		inner sep=0pt][] (a) at (3,2){}; 
	\coordinate(x1) at (1, 3);
	\coordinate(x2) at (1, 1);
	\coordinate(x3) at (3, 3);
	\coordinate(x4) at (3, 1);
	\coordinate(x5) at (5, 3);
	\coordinate(x6) at (5, 1);
	\coordinate(v) at (-2, 2);
	
	\path[{green!60!black},thick]
		(x1) edge (x2)
		(x3) edge (x4)
		(x5) edge (x6)
	;
	\arcThroughThreePoints[red, thick]{x1}{x4}{x5};
	\draw[red, thick] (x2) -- (x6);
	\draw[red, thick] (x1) .. controls (1.6, 3.5) and (2.4,3.5) .. (x3) -- (x6);
	\draw[red, thick] (x5) .. controls (4.6, 3.5) and (3.6,3.5) .. (x3) -- (x2);
	
	\draw[blue, thick] (x1) -- (x5);
	\arcThroughThreePoints[blue, thick]{x6}{x3}{x2};
	
	\draw[blue, thick] (x2) .. controls (1.6,0.5) and (2.4,0.5) .. (x4) -- (x5);
	\draw[blue, thick] (x6) .. controls (4.6,0.5) and (3.6,0.5) .. (x4) -- (x1);	
	\foreach \i in {x1, x2, x3, x4, x5, x6}
			\fill  (\i) circle (2pt);
	\fill[{green!50!black}] (v) circle (2pt);
	\node at (0.7,3.3) {$x_1$};
	\node at (0.7,0.7) {$x_2$};
	\node at (3,0.7) {$x_4$};
	\node at (3,3.3) {$x_3$};
	\node at (5.3,3.3) {$x_5$};
	\node at (5.3,0.7) {$x_6$};
	\node at (-2.2,2) {$\color{green!50!black}{v}$};
	\node at (7,3.1) {$K$};
	\node at (5.5, 2) {$\color{green!50!black}{M}$};
\end{tikzpicture}
\caption{The matching $\color{green!60!black}{M}$ in the link of $\color{green!60!black}{v}$ 
and two Pasch hypergraphs (drawn \textcolor{red}{red} and \textcolor{blue}{blue}).}
\label{fig:11-18}
\end{figure}  

	Now both 
	\begin{align*}
		&x_1x_3x_5, \,\,\, x_1x_4x_6, \,\,\, x_2x_3x_6, \,\,\, x_2x_4x_5 
		\intertext{and} 
		&x_2x_4x_6, \,\,\, x_2x_3x_5, \,\,\, x_1x_4x_5, \,\,\, x_1x_3x_6
	\end{align*} 
	are edge configurations forming Pasch hypergraphs that together with the 
	matching $M$ in the link of $v$ would yield a Fano plane (see Figure~\ref{fig:11-18}). 
	Thus both of the above disjoint rows contain a triple which fails to be an edge of $H$. 
	On the other hand the complement~$H_\star$ has already been observed to possess have 
	at most two edges.
	
	So without loss of generality we may suppose that the edges of $H_\star$ are 
	$x_1x_4x_6$ and $x_2x_4x_6$. Now $x_1$ and $x_2$ are the only vertices 
	of $H_\star$ having degree $1$. If there were a different perfect matching~$M'$ in $L$ 
	not pairing these two vertices with each other, we could repeat the entire 
	argument with $M'$ in place of $M$ and would thus find a Fano plane in $H$.
	
	This shows that all perfect matchings of $L$ use the edge $x_1x_2$. Hence the graph
	$L\sm x_1x_2$ with~$6$ vertices and at least $10$ edges has no perfect matchings 
	at all, which is only possible if this graph consists of a $K_5$ and an isolated 
	vertex. As the edge $x_1x_2$ cannot belong to this $K_5$, the isolated vertex must 
	be either $x_1$ or $x_2$. In both cases 
	\[
		x_1x_2, \,\,\, x_3x_5, \,\,\, x_4x_6
	\]
	is a perfect matching in $L$. Together with the Pasch hypergraph
	\[
		x_1x_3x_4, \,\,\, x_1x_5x_6, \,\,\, x_2x_3x_6, \,\,\, x_2x_4x_5
	\]
	it leads to a Fano plane in $H$, which contradicts the hypothesis. 
	Thus we have indeed $(H\sm v)\cong B_6$.   	  
\end{proof}

Finally, the last statement of this subsection will allow us later to eliminate a 
somewhat annoying case that arises in the induction step from $7$ to $8$ due to 
the non-uniqueness of the extremal hypergraph on $7$ vertices.
 
\begin{fact} \label{fact:8-46}
	Let $H$ be a hypergraph on $8$ vertices not containing a Fano plane. 
	If $K_6^{(3)}\subseteq H$, then $e(H)\le 46<b(8)$.
\end{fact}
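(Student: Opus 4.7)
The plan is to let $K$ be a $6$-element vertex set carrying the clique $K_6^{(3)}$, call the two remaining vertices of $H$ by $u$ and $v$, and then decompose
\[
	e(H) = \binom{6}{3} + a + b + c = 20 + a + b + c,
\]
where $a$ counts the edges of the form $uxy$ with $x,y \in K$ (equivalently the edges of the link graph $L_u$ of $u$ restricted to $K$), $b$ counts the analogous edges containing $v$, and $c$ counts edges of the form $uvz$ with $z \in K$. Trivially $c \le 6$, so the task reduces to showing $a \le 10$ and $b \le 10$, which would give $e(H) \le 46 < 48 = b(8)$, using the even case of~\eqref{eq:bn}.

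To bound $a$, I would argue by contradiction: if $a \ge 11$, then the graph $L_u$ on the six vertices of $K$ has at least eleven edges and hence, by the fact quoted just before Lemma~\ref{lem:11-18}, contains a perfect matching $M = \{x_1x_2,\, x_3x_4,\, x_5x_6\}$. The three matching edges lift to three edges $ux_1x_2$, $ux_3x_4$, $ux_5x_6$ of $H$, and these can be combined with the Pasch hypergraph
\[
	x_1x_3x_5,\quad x_1x_4x_6,\quad x_2x_3x_6,\quad x_2x_4x_5
\]
to produce a Fano plane on $\{u, x_1, \ldots, x_6\}$. Crucially, all four Pasch triples lie entirely inside $K$ and are therefore edges of $H$ for free, since $K_6^{(3)} \subseteq H$. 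This contradicts the hypothesis that $\cF \not\subseteq H$, whence $a \le 10$; the identical argument applied to $v$ gives $b \le 10$.

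Combining these bounds yields $e(H) \le 20 + 10 + 10 + 6 = 46$, and the arithmetic $b(8) = \tfrac{1}{8} \cdot 8^2 \cdot 6 = 48$ completes the proof. I do not expect any genuine obstacle here: the clique $K_6^{(3)}$ is so dense that the supplied Pasch hypergraph comes for free, and the perfect-matching criterion handed to us by the preceding discussion immediately caps the two link degrees. The only mild care required is to verify that the Pasch configuration together with the matching does indeed assemble into a Fano plane on the seven vertices $\{u\} \cup K$, but this is exactly the perspective on $\cF$ recorded in Subsection~\ref{subsec:Fano}.
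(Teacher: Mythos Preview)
Your argument is correct and reaches the same numerical bound $e(H)\le 20+10+10+6=46$ via the same decomposition. The one difference is in how you cap the link degrees at~$10$: the paper obtains this by invoking Lemma~\ref{lem:11-18} as a black box (applied to $H\sm y$, the conclusion $(H\sm y)\sm x\cong B_6$ would contradict $K_6^{(3)}\subseteq H$, so the hypothesis $d(x)\ge 11$ must fail), whereas you unpack the relevant piece of that lemma's proof directly. Your shortcut works precisely because the full clique $K_6^{(3)}$ guarantees \emph{every} Pasch configuration on $K$ is present, so a single perfect matching in the link immediately yields a Fano plane --- no further case analysis is needed. The paper's route is slightly more economical in that Lemma~\ref{lem:11-18} is already available and will be reused later (in the proof of Claim~\ref{clm:B6}); your route is more self-contained for this particular fact.
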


\begin{proof}
	Write $V(H)=K\cup\{x, y\}$, where $K$ induces a $K_6^{(3)}$ in $H$. 
	By Lemma~\ref{lem:11-18} applied to~$H\sm y$ there are at most $10$ 
	edges containing $x$ but not $y$. Similarly, there are at most~$10$ 
	edges containing $y$ but not $x$. Finally, $H$ can have at most $|K|=6$
	edges containing both~$x$ and~$y$. So altogether we have indeed
	\[
		e(H)\le 20+10+10+6=46<48=b(8)\,. \qedhere
	\]
\end{proof}

\subsection{Characterisations of \texorpdfstring{$B_n$}{\it B(n)}}   
  
In our inductive proof of Theorem~\ref{thm:main} we will consider a hypergraph
$H$ on some number $n\ge 8$ of vertices with $b(n)$ edges and $\cF\not\subseteq H$. 
These assumptions will be shown to entail some strong structural properties of $H$ 
and the purpose of this subsection is to check that we can actually conclude $H\cong B_n$
from those properties. 

This is much easier when the number of vertices is even. 

\begin{lem} \label{lem:even} 
	Suppose that $n\ge 6$ is even and that $H$ is a hypergraph on $n$ vertices. 
	If for every vertex $v$ of $H$ the hypergraph $H\sm v$ is isomorphic 
	to $B_{n-1}$, then $H\cong B_n$.
\end{lem}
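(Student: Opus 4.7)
My plan is to extract from the local bipartitions of $H \sm v \cong B_{n-1}$ a global bipartition of $V(H)$, which I will encode as the equivalence classes of a natural symmetric relation on $V(H)$, and then verify that $H$ is the balanced complete bipartite hypergraph for this bipartition. Since $n-1 \geq 5$ is odd, the hypergraph $B_{n-1}$ has a unique (up to swap) bipartition with part sizes $n/2 - 1$ and $n/2$, both at least $2$. For every $v \in V(H)$ this yields a canonical bipartition $V(H) \sm \{v\} = X_v \dcup Y_v$ with $|X_v| = n/2 - 1$ and $|Y_v| = n/2$.

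The first key step is a compatibility analysis between $(X_v, Y_v)$ and $(X_w, Y_w)$ for distinct $v, w$: both must induce the same unordered bipartition on $V(H) \sm \{v, w\}$, because the underlying hypergraph $H \sm \{v,w\}$ is $B_{n-1} \sm w$ with both parts still of size at least $2$. Comparing the multisets of part sizes forces $w \in X_v$ if and only if $v \in X_w$, since the mixed alternative would produce part sizes $\{n/2 - 2,\, n/2\}$ from one perspective but $\{n/2 - 1,\, n/2 - 1\}$ from the other. When $w \in X_v$ and $v \in X_w$, matching the two small parts of size $n/2 - 2$ yields the precise swap formula $X_w = (X_v \sm \{w\}) \cup \{v\}$.

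Next, define a relation on $V(H)$ by $v \sim w$ iff $v \neq w$ and $w \in X_v$; this is symmetric and $(n/2 - 1)$-regular. Transitivity on distinct triples is the crucial point: if $v \sim w$ and $w \sim x$ with $v \neq x$, two applications of the swap formula give $X_w = (X_v \sm \{w\}) \cup \{v\}$ and $X_x = (X_w \sm \{x\}) \cup \{w\}$; since $x \in X_w$ and $x \neq v$, we deduce $x \in X_v$, i.e.\ $v \sim x$. Therefore $\sim$, extended by the diagonal, is an equivalence relation, whose classes each have exactly $n/2$ elements, producing a bipartition $V(H) = A \dcup B$ with $|A| = |B| = n/2$.

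Finally, for any $v \in A$ the equivalence class description reads $X_v = A \sm \{v\}$ and $Y_v = B$ (symmetrically for $v \in B$), so picking any $v \notin T$ for a triple $T$ (possible since $n > 3$) shows $T \in H$ if and only if $T$ meets both $A$ and $B$; hence $H \cong B_n$. I expect the main obstacle to be the transitivity step: without the precise swap formula $X_w = (X_v \sm \{w\}) \cup \{v\}$ emerging from the size-matching in Step~2, the ``non-adjacent'' case $w \in Y_v,\ v \in Y_w$ would contribute no information and one could not force the equivalence classes to have the correct size.
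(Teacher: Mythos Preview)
Your argument is correct in outline and works cleanly for $n \ge 8$, but the justification ``both parts still of size at least $2$'' breaks at $n = 6$: if $w \in X_v$ then $X_v \sm \{w\}$ has size $n/2 - 2 = 1$, and if $w \in Y_v$ and $v \in Y_w$ then $H \sm \{v,w\}$ is complete bipartite with two parts of size $2$, i.e.\ $K_4^{(3)}$, which admits three distinct bipartitions rather than one. Neither point is fatal. The equivalence ``$w \in X_v \Leftrightarrow v \in X_w$'' survives because a complete bipartite hypergraph on $n-2$ vertices with part sizes $\{n/2-2,\, n/2\}$ has strictly more non-edges than one with part sizes $\{n/2-1,\, n/2-1\}$, so the size multiset is an invariant of $H \sm \{v,w\}$ regardless. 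And in the case $w \in X_v$, $v \in X_w$ at $n = 6$ the $(1,3)$-bipartition is still unique (the $3$-part is the unique non-edge), so your swap formula goes through. One sentence treating $n=6$ separately would make the write-up airtight.

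The paper's proof is considerably shorter and takes an asymmetric shortcut. It fixes a single vertex $y$, writes $V \sm \{y\} = X \dcup Y$ with $|X| = n/2$, and observes that for every $y' \in Y$ the set $X$ is still an independent set of size $n/2$ in $H \sm y'$; since $B_{n-1}$ has a \emph{unique} independent set of that size, the bipartition of $H \sm y'$ is forced to be $\bigl(X,\, (Y \cup \{y\}) \sm \{y'\}\bigr)$, and $H \cong B_n$ with classes $X$ and $Y \cup \{y\}$ follows at once. So the paper only ever compares the bipartition at $y$ with those at $y' \in Y$, via the single invariant ``the large independent set'', and never needs your equivalence relation or the swap formula. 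Your route is more symmetric and would transplant more easily to settings where neither part is canonically distinguished, but the paper's one-line shortcut is worth knowing.
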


\begin{proof}
	Let $y\in V(H)$ be arbitrary. Since $H\sm y$ is isomorphic to $B_{n-1}$, there 
	exists a partition $V(H)\sm\{y\}=X\dcup Y$ with $|X|=\frac n2$ and $|Y|=\frac n2-1$
	such that $X$ and $Y$ are independent sets in $H$. The same argument applies to every 
	$y'\in Y$. Since $B_{n-1}$ has a unique independent set of size $\frac n2$, the outcome 
	must be the partition 
	\[
		V(H)\sm\{y'\}=X\dcup \bigl(Y\cup\{y\}\sm \{y'\}\bigr)
		\quad \text{for each } 
		y'\in Y\,.
	\]
	This proves that $H$ is isomorphic to $B_n$ with vertex classes 
	$X$ and $Y\cup \{y\}$. 
\end{proof}

To handle the case where the number of vertices is odd we shall require the following lemma.
Its initial assumption concerning the case $n=8$ will turn out to be harmless, as
we will already know its truth when using the lemma for the first time. 

\begin{lem} \label{lem:odd}
	Assume that Theorem~\ref{thm:main} holds for $n=8$.
	Now let $n\ge 9$ be odd and let $H$ be a hypergraph on $n$ vertices with $b(n)$ edges
	which does not contain a Fano plane.
	Suppose that whenever a four-element set $K\subseteq V(H)$ induces a tetrahedron in $H$
	\begin{enumerate}[label=\rmlabel]
		\item\label{it:oddi} we have $(H\sm K)\cong B_{n-4}$
		\item\label{it:oddii} and every $v\in V(H\sm K)$ has degree exactly 5 in $K$. 
	\end{enumerate}
	Then $H$ is isomorphic to $B_n$.
\end{lem}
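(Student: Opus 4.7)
The plan is to identify the bipartition realising $H\cong B_n$ by starting from a single tetrahedron and propagating structure outward using hypotheses~\ref{it:oddi} and~\ref{it:oddii}. By Fact~\ref{fact:tetra}, $H$ contains a tetrahedron $K$. Applying~\ref{it:oddi} to $K$ and noting that $n-4$ is odd with $n-4\ge 5$, one obtains a uniquely determined bipartition $V(H)\sm K=X\dcup Y$ with $|X|=\tfrac{n-5}{2}<\tfrac{n-3}{2}=|Y|$ witnessing $H\sm K\cong B_{n-4}$. Hypothesis~\ref{it:oddii} then assigns to each $v\in V(H)\sm K$ a unique \emph{missing pair} $m(v)\in\binom{K}{2}$ with $\{v\}\cup m(v)\notin E(H)$.

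Next, observe that for every $v\in V(H)\sm K$ and $k\in K$, the four-set $K_v^k:=(K\sm\{k\})\cup\{v\}$ induces a tetrahedron in $H$ precisely when $k\in m(v)$, in which case~\ref{it:oddi} and~\ref{it:oddii} also apply to $K_v^k$. For $v\in X$ and $k\in m(v)$, since $Y$ is independent of size $\tfrac{n-3}{2}$ inside $V(H)\sm K_v^k$, it must be the larger part of the bipartition witnessing $H\sm K_v^k\cong B_{n-4}$; consequently $(X\sm\{v\})\cup\{k\}$ is the smaller part, and in particular the link graph of $k$ contains no edges inside $X\sm\{v\}$. For $v\in Y$ and $k\in m(v)$, the two candidate smaller parts $X$ and $Y\sm\{v\}$ both have the matching size $\tfrac{n-5}{2}$, so one obtains only the \emph{disjunctive} conclusion that exactly one of ``the link of $k$ is empty on $X$'' and ``the link of $k$ is empty on $Y\sm\{v\}$'' holds.

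The central claim is that there exists a partition $K=K_X\dcup K_Y$ with $|K_X|=|K_Y|=2$ such that $m(v)=K_X$ for every $v\in X$ and $m(v)=K_Y$ for every $v\in Y$. The plan here is to argue by contradiction: if two vertices of $X$ carried distinct missing pairs (or similarly on $Y$, or if $K_X\cap K_Y\ne\emptyset$), the link constraints extracted from swap tetrahedra, coupled with the complete bipartite structure of $H\sm K$, would leave enough freedom to locate a perfect matching in the link of some well-chosen vertex together with a Pasch hypergraph on four auxiliary vertices; via the crossing-pairs/Pasch perspective recalled in Subsection~\ref{subsec:Fano}, this yields a Fano plane inside $H$, contradicting $\cF\not\subseteq H$. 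This is the main obstacle, since it requires a careful case analysis of the possible patterns of $m$ paired with the appropriate Fano constructions in each case.

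Once the partition $K=K_X\dcup K_Y$ is in hand, set $X^*:=X\cup K_X$ and $Y^*:=Y\cup K_Y$ of sizes $\tfrac{n-1}{2}$ and $\tfrac{n+1}{2}$. The edges of $H$ entirely inside $V(H)\sm K$ are bipartite with respect to $(X,Y)\subseteq(X^*,Y^*)$ by the first paragraph; edges with two vertices in $K$ match $B_n$ on the nose because the missing pairs $m(v)$ exhaust precisely the monochromatic triples of $(X^*,Y^*)$; and edges with exactly one vertex in $K$ follow from the link-emptiness conclusions of the second paragraph, the disjunction on $Y$ being resolved by the observation that a $k\in K_Y$ cannot simultaneously have empty link on $X$ without driving the total edge count below $b(n)$. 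Hence $H\cong B_n$ with bipartition $(X^*,Y^*)$.
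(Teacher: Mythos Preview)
Your proposal has a genuine gap at the ``central claim'': the assertion that $K=K_X\dcup K_Y$ with $m(v)=K_X$ for all $v\in X$ and $m(v)=K_Y$ for all $v\in Y$. You concede this is ``the main obstacle'' and promise a ``careful case analysis'' locating Fano planes via Pasch configurations, but you supply none of it; this claim is precisely the heart of the lemma, and the rest of your outline is routine once it is in hand. Moreover, the disjunctive information you extract for $v\in Y$ is weak, and for $n=9$ (where $|X|=|Y\sm\{v\}|=2$) the disjunction is not even justified as stated: a $2$-set is automatically independent in any $3$-uniform hypergraph, so neither $X$ nor $Y\sm\{v\}$ is forced to be one of the two parts of the copy of $B_5$ on $V(H)\sm K_v^k$.

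More tellingly, you never invoke the hypothesis that Theorem~\ref{thm:main} holds for $n=8$, and this is exactly what the paper exploits to bypass the case analysis you are proposing. The paper proceeds as follows. Pick any $x,x'\in X$ and $y,y'\in Y$ and note that $K'=\{x,x',y,y'\}$ is itself a tetrahedron, since $H\sm K\cong B_{n-4}$. Applying condition~\ref{it:oddii} to \emph{both} tetrahedra $K$ and $K'$ yields
\[
	e(K\cup K')=e(K)+e(K')+5\bigl(|K|+|K'|\bigr)=4+4+40=48=b(8)\,,
\]
so by the assumed $n=8$ case the eight vertices $K\cup K'$ induce a copy of $B_8$. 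Because $K$ spans a tetrahedron, the bipartition of this $B_8$ splits $K$ into two pairs $\{v_1,v_2\}$ and $\{v_3,v_4\}$; a short argument using a third vertex $y''\in Y$ and condition~\ref{it:oddii} once more rules out the ``mixed'' splitting $\{v_1,v_2,x,y\}\dcup\{v_3,v_4,x',y'\}$ and forces the classes to be $\{v_1,v_2,x,x'\}$ and $\{v_3,v_4,y,y'\}$. Letting $x,x'$ vary through $X$ (with $y,y'$ fixed) then shows that all of $X\cup\{v_1,v_2\}$ is independent, and symmetrically $Y\cup\{v_3,v_4\}$ is independent, whence $H\cong B_n$. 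This replaces your unproved central claim by a two-line edge count and an appeal to the $n=8$ case; the single-vertex swaps $K_v^k$ and the missing-pair function $m$ are not needed.
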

  
\begin{proof}
	Recall that by Fact~\ref{fact:tetra} there is a tetrahedron contained in $H$,
	say with vertex set $K\subseteq V(H)$. Owing to condition~\ref{it:oddi} there 
	is a partition $V\sm K=X\dcup Y$ witnessing that~$H\sm K$ is indeed isomorphic 
	to $B_{n-4}$. Notice that due to $n\ge 9$ we may suppose that~$|X|\ge 2$ and 
	$|Y|\ge 3$. 
	
	Now consider any four distinct vertices $x, x'\in X$ and $y, y'\in Y$. By 
	clause~\ref{it:oddii} applied to the tetrahedra~$K$ and $K'=\{x, x', y, y'\}$ we obtain
	\[
		e(K\cup K')=e(K)+e(K')+5(|K|+|K'|)=2\cdot 4+5\cdot 8=48=b(8)
	\]
	and by the hypothesised validity of Theorem~\ref{thm:main} for hypergraphs on $8$ 
	vertices it follows that~$K\cup K'$ induces a copy of $B_8$ in $H$. As $K$ induces
	a tetrahedron, there exists an enumeration $K=\{v_1, v_2, v_3, v_4\}$ such that 
	the two independent $4$-sets of this $B_8$ are, possibly after relabelling $y$ and $y'$,  
	\begin{enumerate}[label=\alabel]
		\item\label{it:4aa} either $\{v_1, v_2, x, x'\}$ and $\{v_3, v_4, y, y'\}$
		\item\label{it:4bb} or $\{v_1, v_2, x, y\}$ and $\{v_3, v_4, x', y'\}$.
	\end{enumerate}
\vskip-4em
	\begin{figure}[ht]
\centering
\begin{tikzpicture}
	\coordinate(v1) at (1, 3);
	\coordinate(v2) at (3, 3);
	\coordinate(v3) at (1, 1);
	\coordinate(v4) at (3, 1);
	\coordinate(x) at (5, 3);
	\coordinate(x') at (5, 1);
	\coordinate(y) at (7, 4);
	\coordinate (y') at (7, 2);
	\coordinate (y'') at (7, 0);
	
	\quadrupel{(v1)}{(v2)}{(v4)}{(v3)}{4.5pt}{2.25pt}{yellow!60!black}{yellow ,opacity=0.5};
	\quadrupel{(x)}{(y)}{(y')}{(x')}{4.5pt}{2.25pt}{yellow!60!black}{yellow ,opacity=0.5};
	
	\draw[red!70!black, line width=2pt, rounded corners, line cap=round]
		([yshift=1.5pt]v3) -- ([yshift=1.5pt]x') -- (y');
	\draw[red!70!black, line width=2pt, dashed, rounded corners, line cap=round]
		([yshift=-1.5pt]v3) -- ([yshift=-1.5pt]x') -- (y'');
	\draw[red!70!black, line width=2pt, rounded corners, line cap=round]
		([yshift=1.5pt]v1) -- ([yshift=1.5pt]x) -- (y);
	
	\node at (0.7,3.3) {$v_1$};
	\node at (3.3,3.3) {$v_2$};
	\node at (0.7,0.7) {$v_3$};
	\node at (3.3,0.7) {$v_4$};
	\node at (4.6,3.3) {$x$};
	\node at (4.6,0.7) {$x'$};
	\node at (7.5,4) {$y$};
	\node at (7.6,0) {$y''$};
	\node at (7.6,2) {$y'$};
	
	\node at (5,4) {$X$};
	\node at (7,5) {$Y$};
	\node at (2,2) {$K$};
	\node at (6,2.5) {$K'$};
	
	\node[ellipse, draw=black, thick, minimum width=.6cm,minimum height=3cm, 
		inner sep=0pt][] (a) at (5, 2){}; 
	\node[ellipse,  draw=black, thick, minimum width=.8cm,
		minimum height=5cm, inner sep=0pt][] (a) at (7, 2) {};
	\foreach \i in {v1, v2, v3, v4, x, x', y'', y, y'}
			\fill  (\i) circle (2pt);
\end{tikzpicture}
\caption{The impossible case~\ref{it:4bb}. Tetrahedra are drawn 
as yellow quadruples and independent sets as red lines.}
\label{fig:case-b}
\end{figure}
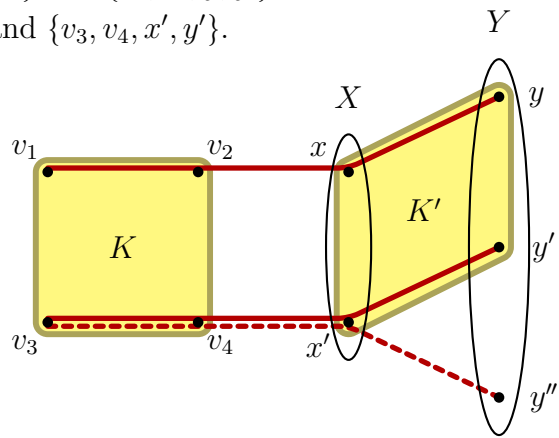

	Now assume for the sake of contradiction that the latter possibility occurs
	(see Fig.~\ref{fig:case-b}). 
	Let $y''\in Y$ be an arbitrary vertex distinct from $y$ and $y'$. When applying
	the argument of the foregoing paragraph to $\{x, x', y, y''\}$ instead of $K'$
	we still have the independent set~$\{v_1, v_2, x, y\}$ and, consequently,  
	$\{v_3, v_4, x', y''\}$ is independent as well. But now, as the edges $v_3x'y'$ 
	and $v_3x'y''$ are missing, the degree of $v_3$ in the tetrahedron $\{x, x', y', y''\}$ 
	is at most $4$, which violates condition~\ref{it:oddii}.  
	This proves that alternative~\ref{it:4bb} is indeed impossible.
	
	Summarising the discussion so far, we know that depending on any four distinct vertices 
	$x, x'\in X$ and $y, y'\in Y$ there is an enumeration $K=\{v_1, v_2, v_3, v_4\}$
	such that the two independent $4$-sets of the copy of $B_8$ induced by 
	$K\cup\{x, x', y, y'\}$ are as mentioned in~\ref{it:4aa}.
	
	Now if we keep $y$ and $y'$ fixed and let the pair $x$, $x'$ vary through $X$
	we will always get the same independent set $\{v_3, v_4, y, y'\}$ and thus 
	the entire set $X\cup\{v_1, v_2\}$ is independent in $H$. Similarly, 
	$Y\cup\{v_3, v_4\}$ is independent as well. Consequently $H$ is indeed isomorphic to 
	$B_n$ with partition classes $X\cup\{v_1, v_2\}$ and  $Y\cup\{v_3, v_4\}$. 
\end{proof}  
    
\section{Multigraphs} \label{sec:multi}

This section builds upon~\cite{FuSi05}*{Section 2-4} and collects several extremal results 
on multigraphs that will be applied at a later occasion to certain link multigraphs arising in 
hypergraphs not containing Fano planes. 

\begin{dfn}
	For a positive integer $p$, a $p$-tuple $\seq{G}=(G_1, \ldots, G_p)$ of graphs on the
	same vertex set $V(\seq{G})$ will be referred to as a {\it $p$-multigraph}. 
\end{dfn}

Extending some pieces of graph theoretic notation to the context of multigraphs, we will write 
$e(\seq{G})=\sum_{i=1}^p e(G_i)$ for the {\it total number of edges} of a $p$-multigraph
$\seq{G}=(G_1, \ldots, G_p)$. Similarly, for every $X\subseteq V(\seq{G})$ we put 
$e(X)=\sum_{i=1}^p e_{G_i}(X)$ and if the members of $X$ are enumerated explicitly
we will omit a pair of curly braces and write, e.g., $e(x, y, z)$ instead of the more baroque 
$e(\{x, y, z\})$. In the special case of two-element sets, the number $e(x, y)$ will be 
called the {\it multiplicity} of the pair $xy$.

With each $p$-multigraph one can associate a corresponding {\it weighted graph} $(V, e)$
given by the set of vertices $V=V(\seq{G})$ and the multiplicity function 
$(x, y)\longmapsto e(x, y)$.
There is a rich literature on extremal problems in weighted graphs and the topic is 
studied both for its own sake (see e.g. \cites{RoSi95, FK02}) and due to its applicability
to other parts of extremal combinatorics, such as Tur\'an's hypergraph problem 
and the Ramsey-Tur\'an theory of graphs (see e.g.~\cites{EHSS, LR-a, LR-b}). 

The main difference between multigraphs and weighted graphs is that the  
former do also keep track of the sets $M(x, y)\subseteq [p]$ containing for every pair 
$xy$ of vertices those indices $i\in [p]$ for which $xy$ is an edge of $G_i$. 
Therefore, there is a richer variety of extremal questions that can be asked in 
the setting of multigraphs. The following such problem is closely tied to the 
Tur\'an number of the Fano plane.

\begin{dfn}
	For $p\ge 3$ a $p$-multigraph $\seq{G}=(G_1, \ldots, G_p)$ is said to contain 
	{\it three crossing pairs} (see Figure~\ref{fig:crossing}) if there are three 
	distinct indices $i, j, k\in [p]$ and four distinct vertices $w, x, y, z\in V(\seq{G})$ 
	such that 
	\begin{enumerate}
		\item[$\bullet$] $wx, yz\in E(G_i)$;
		\item[$\bullet$] $wy, xz\in E(G_j)$;
		\item[$\bullet$] and $wz, xy\in E(G_k)$.
	\end{enumerate}
	The maximum total number of edges that a $p$-multigraph on $n$ vertices can have 
	without containing three crossing pairs is denoted by $f_p(n)$.
\end{dfn}
\vskip-.8em
\begin{figure}[ht]
\centering
\begin{tikzpicture}[scale=0.6]
	\coordinate(y) at (0, 0);
	\coordinate(w) at (0, 3);
	\coordinate(z) at (3, 0);
	\coordinate(x) at (3, 3);	
		
	\path[red,thick]
		(w) edge (x)
		(y) edge (z)
	;
	\path[blue,thick]
		(w) edge (y)
		(x) edge (z)
	;
	\path[{green!60!black},thick]
		(w) edge (z)
		(x) edge (y)
	;
	\foreach \i in {x, y, z, w}
			\fill  (\i) circle (3pt);
		
	\node at (-0.4, -0.4) {$y$};
	\node at (-0.4, 3.2) {$w$};
	\node at (3.4, -0.4) {$z$};
	\node at (3.4, 3.2) {$x$};
\end{tikzpicture}
\caption{Three crossing pairs in $({\color{red}{G_i}}, 
	{\color{blue}{G_j}}, {\color{green!60!black}{G_k}})$.}
\label{fig:crossing}
\end{figure}   
The function $f_4(\cdot)$ was determined by F\"uredi and Simonovits 
in~\cite{FuSi05}*{Theorem 2.2}. Their result plays an important role 
in the proof of our main result and reads as follows.

\begin{thm}\label{thm:f4}
	For every $n\ge 4$ one has 
	\[
		f_4(n) = 2\binom{n}{2} + 2\bigg\lfloor \frac{n^2}{4} \bigg\rfloor\,.
	\]
\end{thm}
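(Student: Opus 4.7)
For the lower bound, I would exhibit a $4$-multigraph $\seq{G}$ attaining the claimed count. Fix a balanced bipartition $V(\seq{G}) = A \dcup B$ with $\big||A|-|B|\big| \le 1$, let $G_1 = G_2$ be the graph of all pairs not contained in $B$, and let $G_3 = G_4$ be the graph of all pairs not contained in $A$. A direct count gives $2\binom{n}{2} + 2|A|\cdot|B| = 2\binom{n}{2} + 2\lfloor n^2/4 \rfloor$. To check that no three crossing pairs appear, I would perform a case analysis on how a $4$-subset $\{w,x,y,z\}$ meets $A$ and $B$: outside the $2$-$2$ pattern all three matchings have colour sets contained either in $\{1,2\}$ or in $\{3,4\}$, while in the $2$-$2$ pattern the matching pairing the within-$A$ edge with the within-$B$ edge has colour set $\{1,2\} \cap \{3,4\} = \emptyset$. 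Either way the bipartite incidence graph between the three matchings and the four colours has a vertex cover of size at most $2$, and so by K\H{o}nig's theorem no system of distinct representatives of size $3$ exists.

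For the matching upper bound, I would argue by induction on $n$, with the base case $n = 4$ verified by a short direct analysis on the six pairs of $K_4$. Writing $d(v) = \sum_{i=1}^{4} d_{G_i}(v)$ for the multi-degree, the per-step increment $f_4(n) - f_4(n-1)$ equals $3n-2$ when $n$ is even and $3(n-1)$ when $n$ is odd, both of which sit just below the average multi-degree $\approx 3n$ of the extremal configuration; consequently, removing a single vertex of minimum multi-degree is too tight to close the induction directly. The natural replacement is a two-vertex deletion, for which the relevant threshold $f_4(n) - f_4(n-2) = 6n - 8$ (for even $n$) should be attainable by averaging the quantity $d(u) + d(v) - e(u,v)$ over pairs $\{u,v\}$.

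The principal obstacle is that the hypothesis is a four-vertex, three-colour condition with the colour indices quantified existentially, and so does not immediately translate into codegree control or forbidden sub-multigraph bounds. The key structural ingredient I would aim to prove is that the auxiliary graph $G^{(\ge 3)} = \{xy : e(x,y) \ge 3\}$ must be close to bipartite, with parts resembling the sides $A$ and $B$ of the construction. The local input is that two disjoint pairs $xy, zw \in G^{(\ge 3)}$ satisfy $|M(x,y) \cap M(z,w)| \ge 2$, whereupon the ``no SDR of size $3$'' condition applied to the three matchings of $\{w,x,y,z\}$ forces the colour sets of the remaining two matchings into a single colour. Propagating this local restriction across all $4$-subsets should yield the desired near-bipartite shape of $G^{(\ge 3)}$; combined with a weighted pair-deletion argument this should close the induction, the most delicate subcase being the one where $\seq{G}$ already closely resembles the extremal configuration and all averaging is tight up to lower order terms.
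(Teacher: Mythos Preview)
This theorem is not proved in the paper at all: it is quoted from F\"uredi and Simonovits~\cite{FuSi05}*{Theorem~2.2} and used as a black box, so there is no in-paper argument against which to compare your attempt.

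Judged on its own, your lower bound is correct, and the construction is exactly the one the paper records immediately after the theorem statement. Your upper bound, however, is only an outline, and one of its key structural claims is false as stated. You assert that if two disjoint pairs $xy,\,zw$ lie in $G^{(\ge 3)}$ then the absence of a size-$3$ SDR among the three matching colour sets forces the remaining two matchings ``into a single colour''. But take $M(x,y)=M(z,w)=\{1,2,3\}$ (so both pairs are in $G^{(\ge 3)}$ and $C_1=\{1,2,3\}$), $M(x,z)=M(y,w)=\{1,2\}$, and $M(x,w)=M(y,z)=\emptyset$: then $C_2=\{1,2\}$ and $C_3=\emptyset$, there is certainly no SDR of size three, yet $C_2$ carries two colours. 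In general the no-SDR condition only says that the bipartite incidence between $\{C_1,C_2,C_3\}$ and $[4]$ has a vertex cover of size at most~$2$, which is much weaker than what you claim. The passage from any such local constraint to a global near-bipartite structure of $G^{(\ge 3)}$, and then from that structure to a working pair-deletion induction, is precisely where the substance of the F\"uredi--Simonovits argument lies; your proposal names these steps but does not carry them out.
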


We would like to mention that F\"uredi and Simonovits also obtained a characterisation 
of the extremal configurations on~$n\ge 8$ vertices (see Figure~\ref{fig:FuSi}). 
Namely, if ${\seq{G}=(G_1, G_2, G_3, G_4)}$ denotes a $4$-multigraph 
on at least~$8$ vertices with~$f_4(n)$ edges that does not contain three crossing pairs, then 
there are a partition $V(\seq{G})= X\dcup Y$ and a permutation $\pi$ in the symmetric 
group $S_4$ such that 
\begin{enumerate}
	\item[$\bullet$] $|X|=\big\lfloor \tfrac{n}{2} \big\rfloor$, 
		$|Y|=\big\lfloor \tfrac{n+1}{2} \big\rfloor$,
	\item[$\bullet$] $E\bigl(G_{\pi(1)}\bigr)=E\bigl(G_{\pi(2)}\bigr)=X^{(2)}\cup K(X, Y)$, 	
	\item[$\bullet$] and $E\bigl(G_{\pi(3)}\bigr)=E\bigl(G_{\pi(4)}\bigr)
			=Y^{(2)}\cup K(X, Y)$,
\end{enumerate}
where $K(X, Y)$ denotes the collection of all pairs $xy$ with $x\in X$ and $y\in Y$.

\begin{figure}[ht]
\centering
\begin{tikzpicture}
	\node at (4,1) {$|X|=\big\lfloor \tfrac{n}{2} \big\rfloor$};
	\node at (4,-1) {$|Y|=\big\lfloor \tfrac{n+1}{2} \big\rfloor$};
	\node at (7, 0) {${\color{brown}{K(X, Y)}}
		={\color{red}{E(G_1)}}\cap{\color{red}{E(G_2)}}\cap
		{\color{green!50!black}{E(G_3)}}\cap{\color{green!50!black}{E(G_4)}}$};
	\node (rect) at (0,0) [fill={brown!50!white}, draw=brown,thick,minimum width=4cm,
		minimum height=1.6cm] {};
	\node[ellipse,fill={red!50!white}, draw=red, thick,minimum width=4cm,minimum height=0.7cm, 
		inner sep=0pt][] (a) at (90:0.8){}; 
	\node[ellipse,fill={green!75!white} ,draw=green, thick,minimum width=4cm,
		minimum height=0.7cm, inner sep=0pt][] (a) at (270:0.8) {};
\end{tikzpicture}
\caption{An extremal $4$-multigraph $({\color{red}{G_1}}, 
	{\color{red}{G_2}}, {\color{green!60!black}{G_3}}, {\color{green!60!black}{G_4}})$
	with $\pi=\mathrm{id}$, 
	${{\color{red}{G_1}}={\color{red}{G_2}}}$, 
	and~ 
	${\color{green!60!black}{G_3}}={\color{green!60!black}{G_4}}$.}
\label{fig:FuSi}
\end{figure}

It can be shown that this characterisation of the extremal configurations extends to
the case~$n=7$ as well, but for $n\in \{4, 5, 6\}$ further extremal multigraphs are 
mentioned in~\cite{FuSi05}.

\smallskip

\centerline{$*\,\,\,\,\,*\,\,\,\,\,*$}

\smallskip

\goodbreak
The remainder of this section deals with the function $f_5(\cdot)$. Two instructive examples 
of $5$-multigraphs without three crossing pairs are the following. 
\begin{enumerate}
	\item[$\bullet$] Let $G_1=G_2=G_3=G_4=G_5$ be a $K_4$-free Tur\'an graph on $n$ vertices.
		Notice that this $5$-multigraph has $5\big\lfloor\frac{n^2}{3}\big\rfloor$ edges.
	\item[$\bullet$] Let $\seq{G_*}=(G_1, G_2, G_3, G_4)$ be an extremal $4$-multigraph
		without three crossing pairs with vertex partition $V(\seq{G_*})=X\dcup Y$ as 
		described earlier, take $G_5$ to be the complete bipartite graph
		between $X$ and $Y$ and consider $\seq{G}=(G_1, \ldots, G_5)$. Clearly the 
		$5$-multigraph $\seq{G}$ does not contain three crossing pairs either and its 
		number of edges is $2\binom{n}{2} + 3\big\lfloor \frac{n^2}{4} \big\rfloor$.
\end{enumerate}

These examples demonstrate 
\begin{equation}\label{eq:f5}
	f_5(n)\ge \max\left( 5 \bigg\lfloor\frac{n^2}{3}\bigg\rfloor, 
		2\binom{n}{2} + 3\bigg\lfloor \frac{n^2}{4} \bigg\rfloor  \right) 
\end{equation}
and, as a matter of fact, we can show that equality holds for every $n$. 
Our proof of this statement is, however, quite laborious 
and relies on extensive case distinctions. 
For this reason we will state and prove below a weaker result on $f_5(\cdot)$ 
which still suffices for the application we have in mind.

\begin{prop} \label{prop:weak-f5}
	We have $f_5(n)\le \tfrac 14(7n^2-n)$ for every natural number $n\ge 3$.
\end{prop}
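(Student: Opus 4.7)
The plan is to argue by induction on $n$, with base case $n=3$ trivial since any $5$-multigraph on three vertices has at most $5\binom{3}{2}=15=\tfrac14(7\cdot 9-3)$ edges in total. For the step I would combine two complementary ingredients.

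The first is a colour-averaging bound. For each $i\in[5]$ the $4$-multigraph $\vec{G}^{(-i)}=(G_j)_{j\ne i}$ still contains no three crossing pairs, so Theorem~\ref{thm:f4} gives $e(\vec{G})-e(G_i)\le f_4(n)$; summing over $i$ yields $e(\vec{G})\le \tfrac{5}{4}f_4(n)$. A direct computation confirms that $\tfrac{5}{4}f_4(n)\le\tfrac14(7n^2-n)$ exactly when $n\le 8$, dispatching those cases without any recursion. The second ingredient is a local estimate: for four distinct vertices $w,x,y,z$, write $M(e)\subseteq[5]$ for the colour set of an edge $e$ and set
\[
  A_1=M(wx)\cap M(yz),\qquad A_2=M(wy)\cap M(xz),\qquad A_3=M(wz)\cap M(xy).
\]
Three crossing pairs on $\{w,x,y,z\}$ correspond exactly to a system of distinct representatives for $A_1,A_2,A_3$ in $[5]$, so Hall's criterion must fail; a case distinction on the six multiplicities of the $K_4$ then yields $\sigma(\{w,x,y,z\}):=\sum_{e\subseteq\{w,x,y,z\}}m(e)\le 25$. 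Summing this estimate over the $\binom{n-1}{3}$ copies of $K_4$ through a fixed vertex $v$ produces a linear upper bound on the multigraph-degree $\deg(v):=\sum_{i=1}^5 d_{G_i}(v)$ in terms of $e(\vec{G})$.

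For $n\ge 9$ the induction would proceed as follows: the hypothesis gives $e(\vec{G}-v)\le \tfrac{(n-1)(7n-8)}{4}$, so it suffices to exhibit a vertex with $\deg(v)\le (7n-4)/2$, and by the handshake lemma such a vertex is automatic whenever $e(\vec{G})\le n(7n-4)/4$. The main obstacle is therefore the residual regime in which every vertex has degree strictly greater than $(7n-4)/2$. To handle it, I would combine the averaging bound with the structural uniqueness of the extremal $4$-multigraph on $n\ge 8$ vertices recalled after Theorem~\ref{thm:f4}: an $\vec{G}$ within $O(1)$ of $\tfrac54 f_4(n)$ would force each $\vec{G}^{(-i)}$ to coincide, up to colour permutation, with the canonical bipartite construction on a single partition $V(\vec{G})=X\dcup Y$, but these five descriptions cannot be jointly consistent. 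Quantifying this inconsistency---or equivalently refining $\sigma\le 25$ to $\sigma\le 24$ outside a short list of exceptional multiplicity patterns on individual $K_4$'s---should absorb the $\Theta(n^2/8)$ gap between the averaging inequality and the claimed bound and complete the step. I expect the laborious case distinction verifying the sharp $K_4$-estimate, together with ruling out the simultaneous five-fold extremality, to be the main technical obstacles.
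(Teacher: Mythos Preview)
Your base cases via colour-averaging and the inductive vertex-deletion step are both valid; in fact the paper uses the same averaging bound $f_5(n)\le\tfrac54 f_4(n)$ for $n\in\{4,5\}$ and the same remove-a-vertex mechanism (phrased as a minimal counterexample). The genuine gap is in your residual regime for $n\ge 9$, where every vertex has degree exceeding $(7n-4)/2$. Neither of your two proposed exits works as stated. First, the structural uniqueness of $f_4$-extremal configurations recalled after Theorem~\ref{thm:f4} applies only to \emph{exact} extremals; in your regime $e(\seq{G})$ may sit anywhere up to $\tfrac54 f_4(n)$, which exceeds the target by $\Theta(n^2/8)$, so the individual $4$-multigraphs obtained by deleting one colour need not be close to extremal at all, and no ``joint inconsistency'' is forced. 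Second, even a uniform refinement to $\sigma\le 24$ on every $K_4$ would, by the very averaging you propose, only give $e(\seq{G})\le 24\binom{n}{4}\big/\binom{n-2}{2}=2n(n-1)$, which is \emph{weaker} than $(7n^2-n)/4$ for all $n\ge 8$; to reach the target by $K_4$-averaging one would need the average $\sigma$ to drop below $21$, while the bipartite extremal example contains many $4$-sets with $\sigma=25$. So neither route can absorb the $\Theta(n^2/8)$ deficit.

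The paper's resolution is quite different and does not average at all beyond the base cases. In the minimal counterexample it first locates a pair of multiplicity~$5$ (this is Lemma~\ref{lem:4vertices}\ref{it:4b}, essentially your Hall/SDR analysis), then extends it to a triple $(x_*,y_*,z_*)$ with $e(x_*,y_*)=5$ and $e(x_*,z_*)+e(y_*,z_*)\ge 8$ chosen so that this last sum is \emph{maximal}. For each of the four admissible patterns $(\alpha,\beta)=(5,5),(5,4),(5,3),(4,4)$ the minimality hypothesis supplies lower bounds on $e^+(X)$ that produce a fourth vertex whose $K_4$ with $x_*,y_*,z_*$ either violates $f_5(4)=25$ or, via Lemma~\ref{lem:4vertices}\ref{it:4a}, forces a new triangle contradicting the maximality of $\alpha+\beta$. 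The extremal choice of the triangle is the key device you are missing; it converts the local $K_4$ information into a direct contradiction rather than feeding it back into a global average.
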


Before we turn to the proof of this fact we take a closer look at the case $n=4$.

\begin{lem} \label{lem:4vertices}
	Let $\seq{G}=(G_1, \ldots, G_5)$ be a $5$-multigraph on four vertices not containing 
	three crossing pairs and set $e=e(\seq{G})$.
	\begin{enumerate}[label=\rmlabel]
	\item\label{it:4a} If $e\ge 23$, then there exists an enumeration $V(\seq{G})=\{w, x, y, z\}$
		such that
		\[
			e(w, x)+e(y, z)\le 5\,.
		\]
	\item\label{it:4b} If $e\ge 22$, then there exist two distinct vertices $u$ and $v$ with 
		$e(u, v)=5$. 
	\end{enumerate} 
\end{lem}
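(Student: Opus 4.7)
My plan is to reduce both parts to a structural case analysis driven by the three-crossing-pairs condition. Enumerate the three perfect matchings of $K_4$ on $V(\seq G)$ as $M_1 = \{wx, yz\}$, $M_2 = \{wy, xz\}$, $M_3 = \{wz, xy\}$, and for each $i \in [5]$ let $A_i \subseteq \{1,2,3\}$ record the matchings contained in $G_i$. The assumption that $\seq G$ has no three crossing pairs is precisely the statement that the bipartite graph with parts $[5]$ and $\{1,2,3\}$ and edge set $\{(i,a): a \in A_i\}$ admits no matching of size $3$, so by K\"onig's theorem it has a vertex cover of size at most $2$. This splits the analysis into three structural cases: \textbf{(I)} the cover consists of two elements of $\{1,2,3\}$, meaning some $M_{a_0}$ is contained in no $G_i$; \textbf{(II)} the cover is a pair $(i_0, a_0)$ split across the two sides, meaning every $G_i$ with $i \neq i_0$ avoids both matchings other than $M_{a_0}$; \textbf{(III)} the cover consists of two elements of $[5]$, meaning at least three of the $G_i$ contain no perfect matching.

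Next I would record the edge bounds that eliminate the restrictive cases. A graph on $4$ vertices without a perfect matching has at most $3$ edges (concretely a triangle with an isolated vertex or a $K_{1,3}$, since avoiding two disjoint edges forces all edges through a common vertex or into a triangle), so Case~III yields $e \leq 3 \cdot 3 + 2 \cdot 6 = 21$. For Case~II with $a_0 = 1$, no $G_i$ with $i \neq i_0$ contains $M_2$ or $M_3$, hence at most one of $wy, xz$ and at most one of $wz, xy$ lies in each such $G_i$; summing over the four indices $i \neq i_0$ gives $\sum_{i \neq i_0} e(G_i) \leq 2\cdot 4 + 1\cdot 4 + 1\cdot 4 = 16$, which together with $e(G_{i_0}) \leq 6$ yields $e \leq 22$.

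For part~\ref{it:4a}, the hypothesis $e \geq 23$ rules out Cases~II and~III, so we must be in Case~I; the pair of edges forming the absent matching $M_{a_0}$ has total multiplicity at most $5$ (since no $G_i$ contributes to both), and this is exactly the required conclusion after relabelling. For part~\ref{it:4b}, Case~III is again impossible. In Case~I, the four pairs outside $M_{a_0}$ carry multiplicity at least $22 - 5 = 17$ in total, and since each pair is bounded by $5$, one of them must equal $5$. In Case~II with $e \geq 22$, equality must hold throughout the preceding estimates, forcing $e(G_{i_0}) = 6$ (so $G_{i_0} = K_4$) together with $wx, yz \in G_i$ for every $i \neq i_0$, and therefore $e(w, x) = e(y, z) = 5$.

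The main obstacle I anticipate is the clean execution of the K\"onig-type translation and keeping the three cases disjoint while chasing the equality conditions in Case~II; once this framework is in place, the remaining work reduces to short arithmetic.
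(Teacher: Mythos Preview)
Your argument is correct and takes a genuinely different route from the paper. The paper argues directly on the three ``matching sums'' $a=e(w,x)+e(y,z)$, $b=e(w,y)+e(x,z)$, $c=e(w,z)+e(x,y)$ (arranged so that $a\le b\le c$): it shows by a short pigeonhole that $a\ge 6$, $b\ge 7$, $c\ge 8$ would force three crossing pairs, and then derives both parts from the failure of this system together with $a+b+c=e$. Your proof instead encodes the forbidden configuration as a size-$3$ matching in the bipartite incidence between $[5]$ and the three perfect matchings of $K_4$, applies K\H{o}nig's theorem to obtain a vertex cover of size at most~$2$, and analyses the resulting three structural cases. Both approaches are short; the paper's is more elementary (pure arithmetic, no auxiliary theorem), while yours yields slightly more structural information---in particular, your equality analysis in Case~II immediately exhibits \emph{two} pairs of multiplicity~$5$, not just one. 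Your worry about keeping the cases disjoint is unnecessary: you only need that at least one case applies, which K\H{o}nig guarantees.
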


\begin{proof}
	Write $V(\seq{G})=\{w, x, y, z\}$ and define $a=e(w, x)+e(y, z)$, $b=e(w, y)+e(x, z)$,
	as well as $c=e(w, z)+e(x, y)$ to be the sums of the multiplicities of the three pairs 
	of disjoint edges. By symmetry we may suppose that the enumeration of $V(\seq{G})$
	we started with has been chosen in such a way that $a\le b\le c$ holds. 

	Now suppose for the sake of contradiction that 
	\begin{equation*} \tag{$\star$}
		a\ge 6, \quad b\ge 7, \quad \text { and } \quad c\ge 8\,.
	\end{equation*}  
	Due to $a\ge 6$ there is an index $i\in[5]$ such that $wx$ and $yz$ are edges of $G_i$.
	Similarly,~$b\ge 7$ implies that there are at least two indices $j\in [5]$ with the property 
	that $wy$ and $xz$ are edges of $G_j$ and, hence, at least one of them is distinct from $i$.
	Proceeding in the same way with $c\ge 8$ one finds an index $k\ne i, j$ 
	for which $wz$ and $xy$ are edges of $G_k$. 
	We have thereby found three crossing pairs in $(G_i, G_j, G_k)$
	and this contradiction proves that $(\star)$ is indeed false.
	 
	Now part~\ref{it:4a} of the lemma follows from the observation that $a+b+c=e\ge 23$
	and $10\ge c\ge b\ge a$ entail $c\ge 8$ and $b\ge 7$. So the failure of $(\star)$
	yields $a\le 5$, as desired. 

	For the proof of part~\ref{it:4b} we notice that $a+b+c=e\ge 22$
	and $c\ge b\ge a$ still imply~$c\ge 8$. The falsity of $(\star)$
	shows that at least one of the estimates $a\le 5$ or $b\le 6$ holds.
	In both cases we obtain $c\ge 9$, meaning that at least one of the two pairs 
	$wz$ or $xy$ has multiplicity~$5$.
\end{proof}

\begin{proof}[Proof of Proposition~\ref{prop:weak-f5}]
	The trivial bound $f_5(3)\le 5\binom{3}{2}=15$ shows that our claim holds for $n=3$.
	Next, an easy averaging argument yields $f_5(n)\le \frac54 f_4(n)$ for every natural 
	number $n$. Due to Theorem~\ref{thm:f4} and~\eqref{eq:f5} this gives the exact values 
	\begin{equation}\label{eq:456}
		f_5(4)=25 \quad \text{ and } \quad f_5(5)=40\,,	
	\end{equation}
	which establish the desired estimate for $n\in\{4, 5\}$.  
	Arguing indirectly we now let $n\ge 6$ denote the least integer for which there exists 
	a $5$-multigraph $\seq{G}=(G_1, \ldots, G_5)$ on $n$ vertices with more than 
	$\frac 14(7n^2-n)$ edges that does not contain three crossing pairs. 
	For every set $X\subseteq V=V(\seq{G})$ of vertices we 
	shall write $e^+(X)=e(\seq{G})-e(V\sm X)$ for the total number of edges having at 
	least one endvertex in~$X$. As long as $0<|X|\le n-3$ the minimality of $n$ yields 
	\[ 
		e(V\sm X)\le \tfrac14\bigl(7(n-|X|)^2-(n-|X|)\bigr)\,,
	\]
	whence 
	\[
		e^+(X)> \tfrac14 (14n|X|-7|X|^2-|X|)\,. 
	\]
	In particular, we obtain 
	\begin{equation}\label{eq:e+}
		e^+(X)\ge \begin{cases}
		\frac 12(7n-3) & \text{ if } |X|=1, \cr
		7n- 7 & \text{ if } |X|=2, \cr
		\frac{21}2 n-16 & \text{ if } |X|=3. 
		\end{cases}
	\end{equation}

	Owing to $e(\seq{G})>\frac 72\binom{n}{2}$ the average edge multiplicity in $\seq{G}$
	is greater than $\frac 72$. Therefore, there exist a set $Q\subseteq V$ consisting  
	of four vertices with $e(Q)> 6\cdot \frac 72 =21$, and by 
	Lemma~\ref{lem:4vertices}\ref{it:4b} it follows that there are two distinct 
	vertices $x$ and $y$ with $e(x, y)=5$. 
	
	According to~\eqref{eq:e+} we have 
	\[
		\sum_{z\in V\sm\{x, y\}} \bigl(e(x, z)+e(y, z)\bigr)= e^+(x, y)-5 \ge 7n-12 > 7(n-2)\,.
	\]
	Consequently, there exists a vertex $z$ distinct from $x$ and $y$ with 
	$e(x, z) + e(y, z) \ge 8$. 
	
	Altogether we have thereby shown that there exist triples $(x_*, y_*, z_*)$ 
	of distinct vertices with 
	\[
		e(x_*, y_*)=5 
		\quad \text { and } \quad
		e(x_*, z_*) + e(y_*, z_*) \ge 8
	\]
	and for the remainder of the proof we fix one such triple with the additional 
	property that $e(x_*, z_*) + e(y_*, z_*)\ge 8$ is maximal. Set $\alpha= e(x_*, z_*)$
	as well as $\beta=e(y_*, z_*)$ and observe that we may suppose $\alpha\ge \beta$
	for reasons of symmetry. Clearly $(\alpha, \beta)$ is one of the four ordered
	pairs $(5, 5)$, $(5, 4)$, $(5, 3)$, or $(4, 4)$. 
	
	Because of 
	\begin{align} \label{eq:exyz}
		 \sum_{v\in V\sm\{x_*, y_*, z_*\}} \bigl(e(v, x_*)+e(v, y_*)+e(v, z_*)\bigr)
		&= e^+(x_*, y_*, z_*) - (5+\alpha+\beta) \\ \notag
		&\overset{\text{\eqref{eq:e+}}}{\ge}
			 \bigl(\tfrac{21}{2} n-16\bigr)-15 
			 > 
			 10 (n-3)
	\end{align}
	there exists a vertex $v_*\ne x_*, y_*, z_*$ satisfying 
	\begin{equation} \label{eq:v11}
	e(v_*, x_*)+e(v_*, y_*)+e(v_*, z_*)\ge 11\,.
	\end{equation}

	By applying the left part of~\eqref{eq:456} to the quadruple 
	$\{v_*, x_*, y_*, z_*\}$ we learn 
	\[
		\alpha+\beta\le 25-11-5=9\,, 
	\]
	meaning that the pair $(\alpha, \beta)$ cannot be $(5, 5)$. 
	
	\begin{figure}[ht]
\centering
\begin{tikzpicture}[scale=0.8]
	\coordinate(x) at (0, 0);
	\coordinate(y) at (0, 3);
	\coordinate(z) at (3, 0);
	\coordinate(v) at (3, 3);	
		
	\draw[line width=1pt] (x) -- (y);
	\draw[line width=1pt] (x) -- (z);
	\draw[line width=1pt] (y) -- (z);
	\draw[dashed, line width=1pt] (x) -- (v);
	\draw[dashed, line width=1pt] (y) -- (v);
	\draw[dashed, line width=1pt] (z) -- (v);
	
	\foreach \i in {x, y, z, v}
			\fill  (\i) circle (2.5pt);
		
	\node at (-0.4, -0.4) {$y_*$};
	\node at (-0.4, 3.2) {$x_*$};
	\node at (3.4, -0.4) {$z_*$};
	\node at (3.4, 3.2) {$v_*$};
	
	\node at (-0.4, 1.6) {$5$};
	\node at (1.6, -0.3) {$4$};
	\node at (1.2, 2.2) {$4$};
\end{tikzpicture}
\caption{The case $\alpha=\beta=4$.}
\label{fig:v11}
\end{figure}
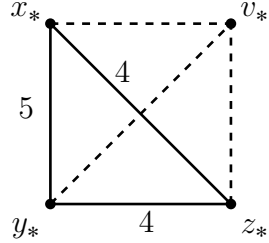
	Assume next that $\alpha=\beta=4$ (see Figure~\ref{fig:v11}), which yields 
	$e(v_*, x_*, y_*, z_*)\ge 13+11=24$.
	Due to Lemma~\ref{lem:4vertices}\ref{it:4a} it follows that either 
	$e(v_*, x_*)\le 1$, $e(v_*, y_*)\le 1$, or $e(v_*, z_*)=0$. 
	The last alternative would contradict~\eqref{eq:v11}, so by symmetry we may suppose 
	that $e(v_*, y_*)\le 1$. Invoking~\eqref{eq:v11} once more we infer that  
	$e(v_*, x_*)=e(v_*, z_*)=5$. 
	But now the edges of the triangle $(v_*, x_*, z_*)$ have multiplicities $5$, $5$, and $4$,
	contrary to the maximal choice of $\alpha+\beta$. 
	We have thereby proved that $(\alpha, \beta)\ne (4, 4)$.    
	
	For these reasons, it must be the case $\alpha=5$ and $\beta\in \{3, 4\}$ (see 
	Figure~\ref{fig:w15}).
	Adding 
	\[
		 \sum_{v\in V\sm\{x_*, y_*, z_*\}} e(v, x_*) 
		 = e^+(x_*) - 10 
		 \overset{\text{\eqref{eq:e+}}}{\ge}
		 \tfrac 72 n - \tfrac{23}2
	\]
	to~\eqref{eq:exyz} we infer
	\begin{align*}
		\sum_{v\in V\sm\{x_*, y_*, z_*\}} \bigl(2e(v, x_*)+e(v, y_*)+e(v, z_*)\bigr)
		& \ge \bigl(\tfrac{21}{2} n-16-14\bigr)+\bigl(\tfrac72 n- \tfrac{23}2\bigr) \\
		&>14(n-3)\,,
	\end{align*}
	which shows that there exists a vertex $w_*\ne x_*, y_*, z_*$ such that
	\begin{equation}\label{eq:w}
		2e(w_*, x_*)+e(w_*, y_*)+e(w_*, z_*)\ge 15\,.
	\end{equation}

	In particular, we have $e(w_*, x_*)+e(w_*, y_*)+e(w_*, z_*)\ge 10$ and, consequently,  
	\[
		e(w_*, x_*, y_*, z_*)\ge 13+10=23\,.
	\]

	\begin{figure}[ht]
\centering
\begin{tikzpicture}[scale=0.8]
	\coordinate(x) at (0, 0);
	\coordinate(y) at (0, 3);
	\coordinate(z) at (3, 0);
	\coordinate(v) at (3, 3);	
		
	\draw[line width=1pt] (x) -- (y);
	\draw[line width=1pt] (x) -- (z);
	\draw[line width=1pt] (y) -- (z);
	\draw[dashed, line width=1pt] (x) -- (v);
	\draw[dashed, line width=1pt] (y) -- (v);
	\draw[dashed, line width=1pt] (z) -- (v);
	
	\foreach \i in {x, y, z, v}
			\fill  (\i) circle (2.5pt);
		
	\node at (-0.4, -0.4) {$y_*$};
	\node at (-0.4, 3.2) {$x_*$};
	\node at (3.4, -0.4) {$z_*$};
	\node at (3.4, 3.2) {$w_*$};
	
	\node at (-0.3, 1.6) {$5$};
	\node at (1.6, -0.4) {$\beta\in\{3, 4\}$};
	\node at (1.2, 2.2) {$5$};
\end{tikzpicture}
\caption{The case $\alpha=5$ and $\beta<5$.}
\label{fig:w15}
\end{figure}
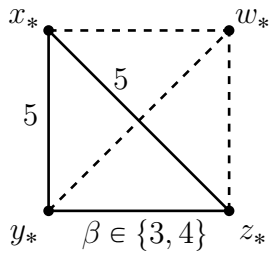

	Appealing to Lemma~\ref{lem:4vertices}\ref{it:4a} again we deduce that 
	at least one of the three cases 
	\[
		e(w_*, x_*)\le 2, \quad e(w_*, y_*)=0, \quad \text{ or } \quad e(w_*, z_*)=0
	\]
	occurs. The first of them is incompatible with~\eqref{eq:w},
	so by symmetry we may suppose that $e(w_*, y_*)=0$. In combination with~\eqref{eq:w}
	this yields~$e(w_*, x_*)=e(w_*, z_*)=5$. But now the triangle $(w_*, x_*, z_*)$
	contradicts the supposed maximality of $\alpha+\beta$. 
\end{proof}

Let us finally summarise the properties of $f_5$ that shall be utilised in the next section. 
\begin{cor}\label{conc:bf}
	If $n\ge 9$ is odd, then 
	\begin{enumerate}[label=\alabel]
		\item\label{it:a} $b(n-5)+f_5(n-5)+7(n-5)+10 < b(n)$ and 
		\item\label{it:b} $\bigl(b(n-6)+ \frac12(n-9)\bigr)+f_5(n-6) +\binom{n-6}{2}
			+10(n-6) +20 <b(n)$.
	\end{enumerate}
\end{cor}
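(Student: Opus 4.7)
The plan is a direct polynomial computation using Proposition~\ref{prop:weak-f5}. Since $n\ge 9$ is assumed odd, we have $n-5$ even (and $\ge 4$) and $n-6$ odd (and $\ge 3$), so that~\eqref{eq:bn} specialises to
\[
b(n)=\tfrac18(n^2-1)(n-2),\quad b(n-5)=\tfrac18(n-5)^2(n-7),\quad b(n-6)=\tfrac18\bigl((n-6)^2-1\bigr)(n-8).
\]
I would replace $f_5(n-5)$ and $f_5(n-6)$ on the left-hand side of each inequality by the upper bound $\tfrac14(7m^2-m)$ from Proposition~\ref{prop:weak-f5}, and then expand both sides as polynomials in a single variable; it will then suffice to verify an elementary polynomial inequality in the stated range.

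For part~\ref{it:a}, I would set $m=n-5$ and compare the coefficients of $1$, $m$, $m^2$, and $m^3$ on the two sides. The cubic and linear coefficients of $b(n)$ match those of the upper bound exactly, and the surviving quadratic and constant terms combine into
\[
b(n)-\bigl(b(n-5)+\tfrac14(7m^2-m)+7m+10\bigr)=\frac{m^2-8}{8}\,,
\]
which is positive for every $m\ge 4$.

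For part~\ref{it:b}, I would similarly set $k=n-6$. This time the cubic \emph{and} the quadratic coefficients on the two sides cancel out, and the surviving linear and constant terms combine into
\[
b(n)-\Bigl(b(n-6)+\tfrac12(n-9)+\tfrac14(7k^2-k)+\binom{k}{2}+10k+20\Bigr)=\frac{3k-5}{4}\,,
\]
which is positive for every $k\ge 3$.

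The only obstacle is careful bookkeeping when expanding the cubics; there is no conceptual content beyond combining~\eqref{eq:bn} with Proposition~\ref{prop:weak-f5}. The constants on the left of the two inequalities are evidently calibrated to make the dominant coefficients cancel, leaving a small positive defect that grows with $n$, and numerical checks at $n=9$ (where the slacks are $3$ in~\ref{it:a} and $1$ in~\ref{it:b}) provide additional confidence in the calculation.
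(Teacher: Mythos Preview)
Your proof is correct and follows exactly the paper's approach: substitute the explicit formula~\eqref{eq:bn} for $b(\cdot)$ and the bound from Proposition~\ref{prop:weak-f5} for $f_5(\cdot)$, then reduce to an elementary inequality. Your residual expressions $\tfrac{m^2-8}{8}$ and $\tfrac{3k-5}{4}$ are precisely the paper's $\tfrac18(n-5)^2>1$ and $\tfrac14(3n-23)>0$ rewritten in your substituted variables; the only minor quibble is that your quoted slack of~$3$ in~\ref{it:a} at $n=9$ uses the exact value $f_5(4)=25$ rather than the Proposition~\ref{prop:weak-f5} bound of~$27$ employed in the argument itself (which gives slack~$1$), but this is a side remark and does not affect the proof.
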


\begin{proof}
	Due to the explicit formula~\eqref{eq:bn} for $b(n)$ and the estimate on $f_5(n-5)$ 
	provided by Proposition~\ref{prop:weak-f5}, part~\ref{it:a} is a consequence of
	$1<\frac 18(n-5)^2$, which is trivially valid. Similarly, 
	part~\ref{it:b} reduces to $0< \frac14(3n-23)$,
	which is likewise obvious. 
\end{proof}

\section{Proof of the Main Theorem} \label{sec:proof}
	This entire section is dedicated to the proof of Theorem~\ref{thm:main}, which 
	proceeds by induction on $n$.
	Since the base case $n=7$ was already treated in 
	Lemma~\ref{lem:n=7}, we may suppose that~$n\ge 8$ and that~\eqref{eq:Fano}
	as well as our statement addressing the extremal hypergraphs hold for every $n'\in [7, n)$
	in place of $n$. Now let $H=(V, E)$ be a hypergraph on $|V|=n$ vertices with $|E|=b(n)$
	edges that does not contain a Fano plane. We are to prove that~$H\cong B_n$. 
	Let us distinguish two cases according to the parity of $n$. 
	
	\smallskip

	{\it \hskip1em First Case: $n\ge 8$ is even.} 

	\medskip	
	  
	For every vertex $v\in V$ we have $e(H\sm v)\le b(n-1)$, since otherwise the
	induction hypothesis would yield a Fano plane in $H\sm v$. 
	This shows that 
	\[
		d(v)\ge b(n)-b(n-1)=3\binom{n/2}2=\frac {3|E|}n
	\]
	holds for every $v\in V$. Due to $\sum_{v\in V}d(v)= 3|E|$ this is only possible if 
	every vertex has degree $3\binom{n/2}2$. But now it follows that for every 
	$v\in V$ the hypergraph $H\sm v$ has exactly~$b(n-1)$ edges. 
	So if $n\ge 10$ the induction hypothesis informs us that the assumption of 
	Lemma~\ref{lem:even} is satisfied, meaning that $H$ is indeed isomorphic to $B_n$.
	In the remaining case~$n=8$ the same conclusion can still be drawn unless there is a vertex 
	$v\in V$ with $(H\sm v)\cong J_7$. But this would entail 
	$K_6^{(3)}\subseteq J_7\subseteq H$ and Fact~\ref{fact:8-46} would show
	$|E|<b(8)$, which contradicts the choice of $H$.
	
	\smallskip\goodbreak

	{\it \hskip1em Second Case: $n\ge 9$ is odd.} 

	\medskip

	For $K\subseteq V$ and $i\in \{0, 1, 2, 3\}$ let $e_i(K)$ denote the number of edges 
	of $H$ with exactly $i$ vertices in $K$. Clearly, we have 
	\begin{equation}\label{eq:K}
		e_0(K)+e_1(K)+e_2(K)+e_3(K)=|E|=b(n)
	\end{equation}    
	for every $K\subseteq V$.

	We need to know later that $H$ cannot contain a clique on five vertices 
	and the claim that follows prepares the proof of this fact. 
	\begin{clm}\label{clm:B6}
		If some six vertices of $H$ span at least $18$ edges, then they induce a copy 
		of $B_6$.
	\end{clm}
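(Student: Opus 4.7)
The plan is to reduce the claim to Lemma~\ref{lem:11-18}. Assume for contradiction that $H[K]\not\cong B_6$, and for $v\in V\setminus K$ write $d_K(v)=\bigl|\{\{x,y\}\subseteq K\colon\{v,x,y\}\in E(H)\}\bigr|$. Then $H[K\cup\{v\}]$ is a $7$-vertex Fano-free hypergraph in which $v$ has degree $d_K(v)$ and $e(H[K])\ge 18$, so Lemma~\ref{lem:11-18} implies that any $v\in V\setminus K$ with $d_K(v)\ge 11$ would force $H[K]\cong B_6$, contradicting our assumption. Hence $d_K(v)\le 10$ for every $v\in V\setminus K$, and consequently $e_2(K)\le 10(n-6)$, writing $e_i(K)$ for the number of edges of $H$ meeting $K$ in exactly $i$ vertices.

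The remaining task is to show this ceiling on $e_2(K)$ is incompatible with $e(H)=b(n)$. Combining the identity $\sum_{i=0}^{3}e_i(K)=b(n)$ with the trivial bound $e_3(K)\le 20$ and matching upper bounds on $e_0(K)$ and $e_1(K)$, one aims to force $e_2(K)>10(n-6)$. The bound on $e_0(K)$ comes from applying the induction hypothesis to the Fano-free subhypergraph $H[V\setminus K]$, giving $e_0(K)\le b(n-6)$ when $n\ge 13$ (and the trivial $\binom{n-6}{3}$ suffices for $n\in\{9,11\}$). The bound on $e_1(K)$ arises by applying the induction hypothesis for $n'=8$ (sharpened by Fact~\ref{fact:8-46} when $K_6^{(3)}\subseteq H$) to every $8$-vertex subset $K\cup\{u,v\}$ with $u,v\in V\setminus K$: this yields $d_K(u)+d_K(v)+p(u,v)\le b(8)-e_3(K)$, where $p(u,v)=|\{k\in K\colon\{k,u,v\}\in E(H)\}|$, and summing over unordered pairs gives
\[
	(n-7)e_2(K)+e_1(K)\;\le\;\bigl(b(8)-e_3(K)\bigr)\binom{n-6}{2}\,.
\]
Finally, the induction hypothesis delivers the degree lower bound $d(w)\ge b(n)-b(n-1)$ for every $w\in V$, which summed over $V\setminus K$ bounds $e_2(K)+2e_1(K)+3e_0(K)$ from below and closes the system.

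The main obstacle will be verifying that these linear inequalities are jointly infeasible uniformly in $n$. For $n=9$, with $|V\setminus K|=3$, the bounds $e_0(K)\le 1$, $e_1(K)\le 18$, $e_3(K)\le 20$ already yield $e_2(K)\ge b(9)-39=31>30$, giving the contradiction directly. For larger odd $n$ the induction bound $e_0(K)\le b(n-6)$ is crucial to counter the cubic growth of the trivial estimate $\binom{n-6}{3}$, and the pair-sum inequality from the $8$-vertex induction plays the decisive role in limiting $e_1(K)$; matching the constant factors uniformly will require careful bookkeeping, but once this is done the strict inequality $e_2(K)>10(n-6)$ emerges, contradicting the consequence of Lemma~\ref{lem:11-18} and completing the proof.
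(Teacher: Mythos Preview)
Your reduction to Lemma~\ref{lem:11-18} for the bound $e_2(K)\le 10(n-6)$ is correct and matches the paper. The gap is in your control of $e_1(K)$: the averaging inequality
\[
	(n-7)\,e_2(K)+e_1(K)\le \bigl(48-e_3(K)\bigr)\binom{n-6}{2}
\]
coming from the $8$-vertex induction is much too weak to close the argument. For instance, take $n=11$. Then $b(11)=135$, and the values $e_0=10$, $e_1=57$, $e_2=50$, $e_3=18$ satisfy every constraint you list: the trivial bound $e_0\le\binom{5}{3}=10$, the bound $e_2\le 10\cdot 5=50$, your pair-sum inequality $4\cdot 50+57=257\le 30\cdot 10=300$, and the degree condition $3\cdot 10+2\cdot 57+50=194\ge 5\cdot 35=175$. (The same phenomenon persists for $n=13$ and beyond; asymptotically your inequality only gives $e_1\lesssim 5n^2$, whereas one needs $e_1\lesssim\tfrac94 n^2$.) So the ``careful bookkeeping'' you anticipate cannot produce the contradiction $e_2(K)>10(n-6)$.

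What you are missing is the structural step the paper takes first: from $e_3(K)\ge 18$ and $H[K]\not\cong B_6$ one deduces that the at most two non-edges of $H[K]$ share a common vertex, so five of the six vertices of $K$ span a $K_5^{(3)}$. That clique is precisely what makes the link-multigraph machinery of Section~\ref{sec:multi} applicable: three crossing pairs among those five links would yield a Fano plane, hence $e_1(K)\le f_5(n-6)+\binom{n-6}{2}$. This bound (via Proposition~\ref{prop:weak-f5}) is sharp enough that Corollary~\ref{conc:bf}\ref{it:b} finishes the job. Your argument never invokes the $K_5^{(3)}$ inside $K$ or the function $f_5$, and without them the counting cannot succeed.
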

	
	\begin{proof}
		Let $K=\{v_1, \ldots, v_6\}\subseteq V$ span at least $18$ edges of $H$
		and suppose for the sake of contradiction that the subhypergraph of $H$ 
		induced by $K$ is not isomorphic to $B_6$. Arguing as in the second paragraph 
		of the proof of Lemma~\ref{lem:11-18} we may assume that $\{v_1, \ldots, v_5\}$ 
		induces a~$K^{(3)}_5$ in $H$.
		
		For $n\ge 13$ we have $n-6\ge 7$ and the induction hypothesis yields, 
		in particular, 
		\begin{equation}\label{eq:60}
			e_0(K)\le b(n-6)+\tfrac12(n-9)\,,
		\end{equation}
		where the additional term $\tfrac12(n-9)$ is actually not needed. 
		The reason for including it here is that for $n\in \{9, 11\}$ 
		it makes the right side equal to the trivial upper bound~$\binom{n-6}{3}$.
		Therefore~\eqref{eq:60} holds in all possible cases.
				
		Now consider the $6$-multigraph 
		$\seq{G}=(G_1, \ldots, G_6)$ with vertex set $V\sm K$, where for~$j\in [6]$ the edges 
		of $G_j$ are inherited from the link graph of~$v_j$. 
		Since $\{v_1, \ldots, v_5\}$ 
		is a clique in~$H$, three crossing pairs in $(G_1, \ldots, G_5)$ would give rise to a 
		Fano plane in $H$. Hence $e(G_1)+\ldots +e(G_5)\le f_5(n-6)$ and together with the 
		trivial bound $e(G_6)\le \binom{n-6}2$ we obtain 
		\begin{equation} \label{eq:61}
				e_1(K)=e(\seq{G}) \le f_5(n-6)+\binom{n-6}2\,.
		\end{equation}

		Moreover, Lemma~\ref{lem:11-18} shows that every vertex $v\in V\sm K$ can contribute 
		at most $10$ edges to~$e_2(K)$, wherefore 
		\begin{equation} \label{eq:62}
			e_2(K)\le 10(n-6)\,. 
		\end{equation}
		By plugging~\eqref{eq:60},~\eqref{eq:61},~\eqref{eq:62}, and the trivial upper 
		bound $e_3(K)\le\binom{6}{3}=20$ into~\eqref{eq:K}
		we arrive at the estimate
		\[ 
			b(n)\le \bigl(b(n-6)+\tfrac12(n-9)\bigr)+ f_5(n-6) +\binom{n-6}{2}+10(n-6) +20\,,
		\] 	
		which contradicts Corollary~\ref{conc:bf}\ref{it:b}. 
		This proves Claim~\ref{clm:B6}.
		\end{proof}
		
		\goodbreak
	
		\begin{clm} \label{clm:K5}
			$K^{(3)}_5\not\subseteq H$
		\end{clm}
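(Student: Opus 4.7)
My plan is to suppose for the sake of contradiction that some five-element set $K=\{v_1,\ldots,v_5\}\subseteq V$ induces a $K^{(3)}_5$ in $H$, and to derive a contradiction by bounding each term of~\eqref{eq:K} and appealing to Corollary~\ref{conc:bf}\ref{it:a}.

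The first step is to observe that $B_6$ itself does not contain a copy of $K^{(3)}_5$: in the bipartition $A\dcup B$ with $|A|=|B|=3$ of $B_6$, any five vertices meet one of the two sides in three vertices, and the triple formed by these three vertices is a non-edge. Consequently, for every $w\in V\sm K$ the six vertices $K\cup\{w\}$ induce a subhypergraph containing $K^{(3)}_5$ and therefore not isomorphic to $B_6$. By the contrapositive of Claim~\ref{clm:B6}, the set $K\cup\{w\}$ spans at most $17$ edges of $H$, and since $\binom{5}{3}=10$ of those edges lie inside $K$, the number of edges of the form $v_iv_jw$ with $\{v_i,v_j\}\subseteq K$ is at most $7$. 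Summing over $w$, this yields the key inequality $e_2(K)\le 7(n-5)$.

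Next I would bound $e_1(K)$ by forming the link multigraph. Let $\seq{L}=(L_1,\ldots,L_5)$ be the $5$-multigraph on $V\sm K$ in which $L_i$ is the link graph of $v_i$ restricted to $V\sm K$. Because $\{v_1,\ldots,v_5\}$ is a clique in $H$, three crossing pairs in $\seq{L}$ would together with two edges inside $K$ produce a Fano plane in $H$ in the manner discussed in Section~\ref{subsec:Fano}. Hence $\seq{L}$ is free of three crossing pairs and $e_1(K)=e(\seq{L})\le f_5(n-5)$. The remaining two terms are immediate: $e_0(K)\le\ex(n-5,\cF)=b(n-5)$ by the induction hypothesis and $e_3(K)=\binom{5}{3}=10$.

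Putting all four bounds into~\eqref{eq:K} gives
\[
	b(n)=e_0(K)+e_1(K)+e_2(K)+e_3(K)\le b(n-5)+f_5(n-5)+7(n-5)+10,
\]
which contradicts Corollary~\ref{conc:bf}\ref{it:a}. The only substantive step here is the first one---translating the presence of a $K^{(3)}_5$ inside a prospective $B_6$ into the degree cap $d_K(w)\le 7$; everything else combines Claim~\ref{clm:B6}, Proposition~\ref{prop:weak-f5}, the three-crossing-pairs reformulation of the Fano plane, and the numerical estimate already packaged in Corollary~\ref{conc:bf}.
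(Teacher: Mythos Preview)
Your proof follows the same strategy as the paper's and is correct for $n\ge 13$, but there is a gap in the bound on $e_0(K)$ when $n\in\{9,11\}$. The induction hypothesis, as set up at the start of Section~\ref{sec:proof}, only asserts $\ex(n',\cF)=b(n')$ for $n'\in[7,n)$; when $n\in\{9,11\}$ we have $n-5<7$, so you cannot invoke it. For $n=9$ this is harmless, since $\ex(4,\cF)=\binom{4}{3}=4=b(4)$ holds trivially. For $n=11$, however, the equality you assert is \emph{false}: the Fano plane has seven vertices, so $\ex(6,\cF)=\binom{6}{3}=20$, whereas $b(6)=18$. Thus the chain $e_0(K)\le\ex(n-5,\cF)=b(n-5)$ breaks down precisely where you need it, and you cannot feed the result into Corollary~\ref{conc:bf}\ref{it:a} as written.

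The paper closes this gap for $n=11$ by another appeal to Claim~\ref{clm:B6}: if the six vertices of $V\sm K$ spanned at least $18$ edges, they would induce a copy of $B_6$ and hence span exactly $18$; either way $e_0(K)\le 18=b(6)$. (Alternatively one can check by hand that the slack in Corollary~\ref{conc:bf}\ref{it:a} at $n=11$ is large enough to absorb the weaker bound $e_0(K)\le 20$, but then you must redo the arithmetic rather than quote the corollary.) Apart from this one point, your argument---in particular the derivation of $e_2(K)\le 7(n-5)$ from the fact that $K^{(3)}_5\not\subseteq B_6$ together with Claim~\ref{clm:B6}, and the link-multigraph bound $e_1(K)\le f_5(n-5)$---coincides with the paper's proof.
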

		
		\begin{proof}
			Assume to the contrary that $K=\{v_1, \ldots, v_5\}\subseteq V$ induces a $K^{(3)}_5$
			in $H$. We contend that $e_0(K)\le b(n-5)$. For $n\ge 13$ this follows indeed 
			from the induction hypothesis, for~$n=9$ we just need to appeal to the trivial bound 
			$e_0(K)\le\binom{n-5}{3}=4=b(n-5)$ and for $n=11$ the desired estimate holds 
			in view of Claim~\ref{clm:B6}.
			
			A link multigraph argument similar to the one encountered in the 
			foregoing proof of~\eqref{eq:61} shows that $e_1(K)\le f_5(n-5)$. Owing to 
			Claim~\ref{clm:B6} every vertex in $V\sm K$ belongs to at most $7$ 
			edges contributing to $e_2(K)$, whence $e_2(K)\le 7(n-5)$. Combining all 
			these estimates and $e_3(K)=10$ with~\eqref{eq:K} we learn
			\[
				b(n)\le b(n-5)+f_5(n-5)+7(n-5)+10\,,
			\]
			which, however, contradicts Corollary~\ref{conc:bf}\ref{it:a}. 
			Thereby Claim~\ref{clm:K5} is proved.
		\end{proof}
		
		In order to conclude the proof of our main result we will now show 
		that $H$ satisfies the assumptions of Lemma~\ref{lem:odd}. Suppose to this 
		end that $K\subseteq V$ induces a tetrahedron in $H$. For $n\ge 11$ the induction 
		hypothesis gives $e_0(K)\le b(n-4)$ and for $n=9$ this estimate could only fail 
		if $V\sm K$ induces a $K_5^{(3)}$ in $H$, which would contradict Claim~\ref{clm:K5}.
		Thus we obtain
		\begin{equation} \label{eq:4}
			b(n)\le b(n-4)+f_4(n-4)+5(n-4)+4
		\end{equation}
		in the usual manner, where the factor $5$ in front of $(n-4)$ comes from 
		the absence of $5$-cliques in $H$. In view of~\eqref{eq:bn} and Theorem~\ref{thm:f4}
 		the right side equals 
		\begin{align*}
			&\frac18\bigl((n-4)^2-1\bigr)(n-6)+2\binom{n-4}{2}+\frac12\bigl((n-4)^2-1\bigr)
			+5(n-4)+4 \\
			=&
			\frac18(n^2-1)(n-2)=b(n)\,,
		\end{align*} 
		meaning that~\eqref{eq:4} actually holds with equality. In particular, this 
		yields 
		\begin{equation} \label{eq:40} 
			e_0(K)=b(n-4)
	    \end{equation} 
		and 
		\begin{equation*} \label{eq:42} 
			e_2(K)=5(n-4)\,.
	    \end{equation*} 

	    The latter equation proves immediately that $K$ obeys clause~\ref{it:oddii} 
	    from Lemma~\ref{lem:odd}. It remains to check that, similarly,~\eqref{eq:40}
	    leads to~\ref{it:oddi}, i.e., to $(H\sm K)\cong B_{n-4}$. 
	    For $n\ge 13$ this is indeed true due to the induction hypotheses. 
	    For $n=11$ we need to point out additionally that $H\sm K$ cannot be
	    isomorphic to $J_7$, as this hypergraph contains a copy of~$K_5^{(3)}$, whilst $H$ 
	    does not.
	    Finally, for $n=9$ the desired statement is a simple consequence of the fact 
	    that $B_5$, the five-clique with one edge removed, is the only hypergraph 
	    on~$5$ vertices with $b(5)=9$ edges.  
	    This concludes the proof of our main result.

\section*{Acknowledgement} We would like to thank Mikl\'os Simonovits for sending us 
a copy of~\cite{So76}, Zolt\'an F\"uredi~\cite{Fu} for further information regarding 
the history of the problem, and the referees for a careful reading of this article. 

\begin{bibdiv}
\begin{biblist}

\bib{B}{book}{
   author={Bollob\'{a}s, B\'{e}la},
   title={Extremal graph theory},
   note={Reprint of the 1978 original},
   publisher={Dover Publications, Inc., Mineola, NY},
   date={2004},
   pages={xx+488},
   isbn={0-486-43596-2},
   review={\MR{2078877}},
}

\bib{Br83}{article}{
   author={Brown, W. G.},
   title={On an open problem of Paul Tur\'{a}n concerning $3$-graphs},
   conference={
      title={Studies in pure mathematics},
   },
   book={
      publisher={Birkh\"{a}user, Basel},
   },
   date={1983},
   pages={91--93},
   review={\MR{820213}},
}

\bib{DeFu00}{article}{
   author={De Caen, Dominique},
   author={F\"uredi, Zolt\'an},
   title={The maximum size of 3-uniform hypergraphs not containing a Fano
   plane},
   journal={J. Combin. Theory Ser. B},
   volume={78},
   date={2000},
   number={2},
   pages={274--276},
   issn={0095-8956},
   review={\MR{1750899}},
   doi={10.1006/jctb.1999.1938},
}


\bib{Er77}{article}{
   author={Erd{\H{o}}s, Paul},
   title={Paul Tur\'an, 1910--1976: his work in graph theory},
   journal={J. Graph Theory},
   volume={1},
   date={1977},
   number={2},
   pages={97--101},
   issn={0364-9024},
   review={\MR{0441657 (56 \#61)}},
}

\bib{EHSS}{article}{
   author={Erd{\H{o}}s, P.},
   author={Hajnal, A.},
   author={S{\'o}s, Vera T.},
   author={Szemer{\'e}di, E.},
   title={More results on Ramsey-Tur\'an type problems},
   journal={Combinatorica},
   volume={3},
   date={1983},
   number={1},
   pages={69--81},
   issn={0209-9683},
   review={\MR{716422}},
   doi={10.1007/BF02579342},
}

\bib{Fu}{unpublished}{
   author={F\"uredi, Zolt\'an},
   note={Personal communication},
}

\bib{FK02}{article}{
   author={F\"uredi, Zolt\'an},
   author={K\"undgen, Andr\'e},
   title={Tur\'an problems for integer-weighted graphs},
   journal={J. Graph Theory},
   volume={40},
   date={2002},
   number={4},
   pages={195--225},
   issn={0364-9024},
   review={\MR{1913847}},
   doi={10.1002/jgt.10012},
}

\bib{FuSi05}{article}{
   author={F\"uredi, Zolt\'an},
   author={Simonovits, Mikl\'os},
   title={Triple systems not containing a Fano configuration},
   journal={Combin. Probab. Comput.},
   volume={14},
   date={2005},
   number={4},
   pages={467--484},
   issn={0963-5483},
   review={\MR{2160414}},
   doi={10.1017/S0963548305006784},
}

\bib{KNS64}{article}{
   author={Katona, Gyula},
   author={Nemetz, Tibor},
   author={Simonovits, Mikl\'os},
   title={On a problem of Tur\'an in the theory of graphs},
   language={Hungarian, with Russian and English summaries},
   journal={Mat. Lapok},
   volume={15},
   date={1964},
   pages={228--238},
   issn={0025-519X},
   review={\MR{0172263}},
}

\bib{KeMu12}{article}{
   author={Keevash, Peter},
   author={Mubayi, Dhruv},
   title={The Tur\'an number of $F_{3,3}$},
   journal={Combin. Probab. Comput.},
   volume={21},
   date={2012},
   number={3},
   pages={451--456},
   issn={0963-5483},
   review={\MR{2912791}},
   doi={10.1017/S0963548311000678},
}

\bib{KeSu05}{article}{
   author={Keevash, Peter},
   author={Sudakov, Benny},
   title={The Tur\'an number of the Fano plane},
   journal={Combinatorica},
   volume={25},
   date={2005},
   number={5},
   pages={561--574},
   issn={0209-9683},
   review={\MR{2176425}},
   doi={10.1007/s00493-005-0034-2},
}

\bib{Ko82}{article}{
   author={Kostochka, A. V.},
   title={A class of constructions for Tur\'{a}n's $(3,\,4)$-problem},
   journal={Combinatorica},
   volume={2},
   date={1982},
   number={2},
   pages={187--192},
   issn={0209-9683},
   review={\MR{685045}},
   doi={10.1007/BF02579317},
}

\bib{LR-a}{article}{
	author={L\"uders, Clara Marie},
	author={Reiher, Christian}, 
	title={The Ramsey-Tur\'an problem for cliques}, 
	eprint={1709.03352},
	note={Israel Journal of Mathematics. To Appear},
}

\bib{LR-b}{article}{
	author={L\"uders, Clara Marie},
	author={Reiher, Christian}, 
	title={Weighted variants of the Andr\'asfai-Erd\H{o}s-S\'os Theorem}, 
	eprint={1710.09652},
	note={Submitted},
}

\bib{MuRo02}{article}{
   author={Mubayi, Dhruv},
   author={R\"odl, Vojt\^ech},
   title={On the Tur\'an number of triple systems},
   journal={J. Combin. Theory Ser. A},
   volume={100},
   date={2002},
   number={1},
   pages={136--152},
   issn={0097-3165},
   review={\MR{1932073}},
   doi={10.1006/jcta.2002.3284},
}

\bib{Pasch}{book}{
   author={Pasch, Moritz},
   title={Vorlesungen \"{u}ber neuere Geometrie},
   language={German},
   series={Teubner Studienb\"ucher Mathematik. [Teubner Mathematical
   Textbooks]},
   edition={Second edition},
   note={With supplementary material by Paul Bernays},
   publisher={B. G. Teubner, Leipzig und Berlin},
   date={1912},
   pages={iv+225},
   isbn={3-519-32020-7},
   review={\MR{1109913}},
   
}
		
\bib{Ra07}{article}{
   author={Razborov, Alexander A.},
   title={Flag algebras},
   journal={J. Symbolic Logic},
   volume={72},
   date={2007},
   number={4},
   pages={1239--1282},
   issn={0022-4812},
   review={\MR{2371204 (2008j:03040)}},
   doi={10.2178/jsl/1203350785},
}

\bib{Ra10}{article}{
   author={Razborov, Alexander A.},
   title={On 3-hypergraphs with forbidden 4-vertex configurations},
   journal={SIAM J. Discrete Math.},
   volume={24},
   date={2010},
   number={3},
   pages={946--963},
   issn={0895-4801},
   review={\MR{2680226 (2011k:05171)}},
   doi={10.1137/090747476},
}

\bib{RoSi95}{article}{
   author={R\"odl, Vojt\v ech},
   author={Sidorenko, Alexander},
   title={On the jumping constant conjecture for multigraphs},
   journal={J. Combin. Theory Ser. A},
   volume={69},
   date={1995},
   number={2},
   pages={347--357},
   issn={0097-3165},
   review={\MR{1313901}},
   doi={10.1016/0097-3165(95)90057-8},
}

\bib{Si68}{article}{
   author={Simonovits, M.},
   title={A method for solving extremal problems in graph theory, stability
   problems},
   conference={
      title={Theory of Graphs},
      address={Proc. Colloq., Tihany},
      date={1966},
   },
   book={
      publisher={Academic Press, New York},
   },
   date={1968},
   pages={279--319},
   review={\MR{0233735}},
}

\bib{So76}{article}{
   author={S\'os, Vera T.},
   title={Remarks on the connection of graph theory, finite geometry and
   block designs},
   language={English, with Italian summary},
   conference={
      title={Colloquio Internazionale sulle Teorie Combinatorie},
      address={Roma},
      date={1973},
   },
   book={
      publisher={Accad. Naz. Lincei, Rome},
   },
   date={1976},
   pages={223--233. Atti dei Convegni Lincei, No. 17},
   review={\MR{0543051}},
}
			
\bib{Turan}{article}{
   author={Tur{\'a}n, Paul},
   title={Eine Extremalaufgabe aus der Graphentheorie},
   language={Hungarian, with German summary},
   journal={Mat. Fiz. Lapok},
   volume={48},
   date={1941},
   pages={436--452},
   review={\MR{0018405}},
}

\bib{Tutte}{article}{
   author={Tutte, W. T.},
   title={The factorization of linear graphs},
   journal={J. London Math. Soc.},
   volume={22},
   date={1947},
   pages={107--111},
   issn={0024-6107},
   review={\MR{0023048}},
   doi={10.1112/jlms/s1-22.2.107},
}

\end{biblist}
\end{bibdiv}

\end{document}